\newtheorem{lemma}{Lemma}
\newtheorem{proposition}{Proposition}
\DeclareMathOperator*{\argmin}{argmin}
\newcommand{\braces}[1]{\left\{ #1 \right \}}
\newcommand{\reals}{\mathbb{R}}
\newcommand{\norm}[1]{\left \| #1 \right \|}
\newcommand{\conv}{\text{conv}}
\newcommand{\Proj}{\text{Proj}}
\newcommand{\s}{s}
\newcommand{\z}{{z}}
\newcommand{\optval}{\zeta}
\newcommand{\K}{K}
\newcommand{\F}{D}
\newcommand{\Q}{\mathcal{Q}}
\newcommand{\SC}{\mathcal{S}}
\newcommand{\red}[1]{\textcolor{black}{#1}}
\newcommand{\blue}[1]{\textcolor{black}{#1}}
\definecolor{darkgreen}{rgb}{0,0.5,0}
\newcommand{\green}[1]{\textcolor{black}{#1}}
\newcommand{\st}{\; : \;}
\newcommand{\exclude}[1]{}
\newcommand{\bxi}{{\mathbf \xi}}
\newcommand{\dm}{\displaystyle}
\newcommand{\V}{V}
\begin{document}

\title{{Combining Progressive Hedging with a Frank-Wolfe Method to Compute Lagrangian Dual Bounds in Stochastic Mixed-Integer Programming}\thanks{The work of authors Boland, Christiansen, Dandurand, Eberhard, Linderoth, and Oliveira was supported in part or in whole by the Australian Research Council (ARC) grant ARC DP140100985. The work of authors Linderoth and Luedtke was supported in part by the U.S. Department of Energy, Office of Science, Office of Advanced Scientific Computing Research, Applied Mathematics program under contract number DE-AC02-06CH11357. }}

% Authors: full names plus addresses.
\author{
	Natashia Boland\thanks{Georgia Institute of Technology, Atlanta, Georgia, USA ({natashia.boland@gmail.com})}
	\and
	Jeffrey Christiansen\thanks{RMIT University, Melbourne, Victoria,
		Australia ({s3507717@student.rmit.edu.au}, {brian.dandurand@rmit.edu.au}, {andy.eberhard@rmit.edu.au}, {fabricio.oliveira@rmit.edu.au})}
	\and
	Brian Dandurand\footnotemark[3]
	\and
	Andrew Eberhard\footnotemark[3]
	\and
	Jeff Linderoth\thanks{Department of Industrial and Systems
		Engineering, Wisconsin Institutes of Discovery, University of
		Wisconsin-Madison, Madison, Wisconsin, USA ({linderoth@wisc.edu},
		{jim.luedtke@wisc.edu})}
	\and
	James Luedtke\footnotemark[4]
	\and
	Fabricio Oliveira\footnotemark[3]
}
\date{20/05/2016}
\maketitle

\begin{abstract}
We present a new primal-dual algorithm for computing the value of the Lagrangian dual of a stochastic mixed-integer program (SMIP) formed by relaxing its nonanticipativity constraints. \red{This dual is widely used in decomposition methods for the solution of SMIPs.} The algorithm relies on the well-known progressive hedging method, but unlike previous progressive hedging approaches for SMIP, our algorithm can be shown to converge to the optimal Lagrangian dual value. The key improvement in the new algorithm is an inner loop of optimized linearization steps, similar to those taken in the classical Frank-Wolfe method. Numerical results demonstrate that our new algorithm empirically outperforms the standard implementation of progressive hedging for obtaining bounds in SMIP. \newline
\bigskip

{{\bf Keywords:} mixed-integer stochastic programming, Lagrangian duality, progressive
	hedging, Frank-Wolfe method}
\end{abstract}

%\begin{keywords}

%\end{keywords}

%90C06  	Large-scale problems
%90C11  	Mixed integer programming
%90C15  	Stochastic programming
%90C46  	Optimality conditions, duality
%\begin{AMS}
%  90C06,90C11,90C15,90C46
%\end{AMS}

\section{Introduction}
\label{SectionIntro}

Stochastic programming with recourse provides a framework for modeling problems where decisions are made in stages.  Between stages, some uncertainty in the problem parameters is unveiled, and decisions in subsequent stages may depend on the outcome of this uncertainty. When some decisions are modeled using discrete variables, the problem is known as a Stochastic Mixed-Integer Programming (SMIP) problem. The ability to simultaneously model uncertainty and discrete decisions make SMIP a powerful modeling paradigm for applications. Important applications employing SMIP models include unit commitment and hydro-thermal generation scheduling \cite{NowakRomisch:2000,takriti.birge.long:96}, military operations \cite{salmeron.wood.morton:09}, vaccination planning \cite{ozaltin.et.al:11,tanner.sattenspiel.ntaimo:08}, air traffic flow management \cite{agustin.et.al:12}, forestry management and forest fire response \cite{badilla.et.al:15,ntaimo.et.al:12}, and supply chain and logistics planning \cite{laporte.louveaux.mercure:92,louveaux:86}. However, the combination of uncertainty and discreteness makes this class of problems extremely challenging from a computational perspective. In this paper, we present a new and effective algorithm for computing lower bounds that arise from a Lagrangian-relaxation approach.

The mathematical statement of a two-stage SMIP is
\begin{equation}\label{EqSMIP}
\optval^{SMIP} := \min_{x} \braces{c^\top x + \Q(x) \st x \in X},
\end{equation}
where the vector $c \in \mathbb{R}^{n_x}$ is known, and $X$ is a mixed-integer linear set consisting of linear constraints and integer restrictions on some components of $x$.  The function $\Q : \reals^{n_x} \mapsto \reals$ is the expected recourse value
\[ \Q(x) := \mathbb{E}_\bxi \left[ \min_y \braces{ q(\bxi)^\top y \st W(\bxi) y = h(\bxi) - T(\bxi)x, y \in Y(\bxi)} \right].
\]
We assume that the random variable $\bxi$ is taken from a discrete distribution indexed by the finite set $\SC$\red{,} consisting \blue{of the} realizations, $\xi_1, \dots, \xi_{|\SC|}$, with strictly positive corresponding probabilities of realization, $p_1, \dots, p_{|\SC|}$. When $\bxi$ is not discrete, a finite scenario approximation can be obtained via Monte Carlo sampling \cite{kleywegt.shapiro.homemdemello:01,mak.morton.wood:99} or other methods \cite{homem-de-mello:08,chen.et.al:14}.  Each realization $\xi_\s$ of $\bxi$, is called a {\em scenario} and encodes the realizations observed for each of the random elements $\left(q(\xi_\s), h(\xi_\s), W(\xi_\s), T(\xi_\s), Y(\xi_\s)\right)$. For notational brevity, we refer to this collection of random elements respectively as $\left(q_\s, h_\s, W_\s, T_\s, Y_\s \right)$.  \exclude{We assume that the random elements have the   dimensions $q_\s \in \mathbb{R}^{n_y}$, $h_\s \in \mathbb{R}^{m}$, $W_\s \in \mathbb{R}^{n_y \times m}$, and $T_\s \in \mathbb{R}^{n_x \times m}$ for each $s \in \SC$.} For each $\s\in \SC$, the set $Y_\s \subset \reals^{n_y}$ is a mixed-integer set containing both linear constraints and integrality constraints on a subset of the
variables, $y_s$.

The problem \eqref{EqSMIP} may be reformulated as its \textit{deterministic equivalent}
\begin{equation}\label{EqSMIPEF}
\optval^{SMIP} = \min_{x,y} \braces{\begin{array}{c} c^\top x + \dm\sum_{\s \in \SC} p_\s q_\s^\top y_\s \st
(x,y_\s)  \in \K_\s, \forall \s \in \SC\end{array}},
\end{equation}
where $\K_s := \braces{(x, y_s) \st W_\s y_\s = h_\s - T_\s x, x \in
  X, y_\s \in Y_\s}$.
\green{Problem~\eqref{EqSMIPEF} has a special structure that} can be algorithmically exploited by decomposition methods. To induce a decomposable structure, scenario-dependent copies $x_\s$ for each $\s \in S$ of the first-stage variable $x$ are introduced to create the following reformulation of \eqref{EqSMIPEF}: %
\begin{equation}\label{EqSMIPEFS}
\optval^{SMIP} = \min_{x,y,\z} \braces{\begin{array}{c}\!\!\!\dm\sum_{\s \in \SC} p_\s(c^\top x_\s + q_\s^\top y_\s) \st (x_\s,y_\s) \in \K_\s, x_\s = \z,\; \forall \s \in \SC, z \in \reals^{n_x} \end{array}\!\!\!}.
\end{equation}
The constraints $x_\s = \z$, $\s \in \SC$, enforce nonanticipativity for first-stage decisions; the first-stage decisions $x_\s$ must be the same $(\z)$ for each scenario $\s \in \SC$. Applying Lagrangian relaxation to the nonanticipativity constraints in problem~\eqref{EqSMIPEFS} yields the \emph{nonanticipative Lagrangian dual function}
\begin{equation}\label{EqLagRelMu}
\phi(\mu) := \min_{x,y,\z}\braces{\begin{array}{c} \sum_{s \in \SC} \left\lbrack p_\s(c^\top x_\s + q_\s^\top y_\s) + \mu_\s^\top (x_\s - \z)\right\rbrack \st \\ (x_\s,y_\s) \in
  \K_\s, \ \forall \s \in \SC, z \in \reals^{n_x}\end{array}},
\end{equation}
where $\mu=(\mu_1,\dots,\mu_{|\SC|}) \in \prod_{\s \in \SC} \reals^{n_x}$ is the vector of multipliers associated with the relaxed constraints $x_\s = \z$, $\s \in \SC$. By setting $\omega_\s := \frac{1}{p_\s} \mu_\s$,~\eqref{EqLagRelMu} may be rewritten as
\begin{equation}\label{EqLagRel}
\phi(\omega) := \min_{x,y,\z}\braces{\sum_{s \in \SC} p_\s L_s(x_s,y_s,\z,\omega_s) \st (x_\s,y_\s) \in \K_\s,\;  \forall \s \in \SC, \; z \in \reals^{n_x}},
\end{equation}
where
\[ L_s(x_s,y_s,\z,\omega_s):= c^\top x_\s + q_\s^\top y_\s+ \omega_\s^\top (x_\s - \z). \]
Since $\z$ is unconstrained in the optimization problem in the definition \eqref{EqLagRel},
in order for the Lagrangian function $\phi(\omega)$ to be bounded from below, we require as a condition of dual feasibility that $\sum_{\s \in \SC} p_\s\omega_\s = 0$.
Under this assumption, the $\z$ term vanishes, and the Lagrangian dual function~\eqref{EqLagRel} decomposes into separable functions,
\begin{equation}
\label{eq:lagrangian-1}
\phi(\omega) = \sum_{\s \in \SC} p_\s\phi_\s(\omega_s),
\end{equation}
%
%which becomes separable into $\phi(\omega_1, \dots, \omega_{|\SC|}) = \sum_{s \in \SC}p_\s\phi_\s(\omega_s)$,
where for each $\s \in \SC$,
\begin{equation}
\label{eq:lagrangian-1s}
\phi_\s(\omega_s) := \min_{x,y} \braces{ (c+\omega_\s)^\top x + q_\s^\top y   \st  (x,y) \in \K_\s}.
\end{equation}
The reformulation~\eqref{eq:lagrangian-1} is the basis for parallelizable approaches for computing dual bounds that are used, for example, in the dual decomposition methods developed in~\cite{caroe1999dual, LubinEtAl2013}.

For any choice of $\omega = \left(\omega_1, \dots, \omega_{|\SC|}\right)$, it is well-known that the value of the Lagrangian provides a lower bound on the optimal solution to \eqref{EqSMIP}: $\phi(\omega) \leq \optval^{SMIP}$.
The problem of finding the best such lower bound is the \emph{Lagrangian dual problem}:
\begin{equation}
\label{eq:zldprob}
 \optval^{LD} := \sup_{ \omega }
\braces{ \phi(\omega) \st \sum_{\s \in \SC} p_\s\omega_\s = 0}.
\end{equation}
The primary contribution of this work is a new and effective method for
solving \eqref{eq:zldprob}, thus enabling a practical and efficient computation of high-quality
lower bounds for $\optval^{SMIP}$.

%method makes optimal use of the decomposable structure evident in the formulation~\eqref{eq:lagrangian-1} and~\eqref{eq:lagrangian-1s} of the nonanticipative Lagrangian dual function.

The function $\phi(\omega)$ is a piecewise-\red{affine} concave function,
and many methods are known for maximizing such functions. These
methods include the subgradient method~\cite{Shor1985},
%the L-shaped method~\cite{LaporteLouveaux1993,CaroeTind1998,SheraliFraticelli2002,SheraliZhu2006,BirgeLouveaux2011},
the augmented Lagrangian (AL) method~\cite{Hestenes1969,Powell1969},
and the alternating direction method of multipliers
(ADMM)~\cite{GabayMercier1976,EcksteinBertsekas1992,BoydEtAl2011}. The
subgradient method has mainly theoretical significance, since it is
difficult to develop reliable and efficient step-size rules for the
dual variables $\omega$ (see, e.g., Section 7.1.1
of~\cite{Ruszczynski2006}). As iterative primal-dual approaches,
methods based on the AL method or ADMM are more effective in practice.
However, in the context of SMIP, both methods require convexification
of the constraints $\K_\s$, $\s \in \SC$ to have a meaningful
theoretical support for convergence to the best lower bound value
$\optval^{LD}$.  Furthermore, both methods require the solution of
additional mixed-integer linear programming (MILP) subproblems in
order to recover the Lagrangian lower bounds associated with the dual
values, $\omega$ \cite{GadeEtAl2016}.  ADMM has a more straightforward
potential for decomposability and parallelization than the AL method,
and so in this work, we develop a theoretically-supported
modification of a method based on ADMM.

%, but these methods require the solution of primal subproblems that are not decomposable by scenario.% structure in $\sum_{s \in \SC} p_\s L_s(x_s,y_s,\z,\omega_s)$

%  and the L-shaped method are more effective in practice. However, the augmented Lagrangian method cannot benefit from the decomposable structure in $\sum_{s \in \SC} p_\s L_s(x_s,y_s,\z,\omega_s)$, while the L-shaped method requires the use of ad-hoc convexification techniques to be used.  [....]

%However, while the L-shaped method makes use of the decomposable structure of an SMIP, it requires the assumption that all integrality is confined to the first stage.
%Furthermore, the augmented Lagrangian method cannot benefit from the decomposable structure in $\sum_{s \in \SC} p_\s L_s(x_s,y_s,\z,\omega_s)$.

%To address the last short-coming of the augmented Lagrangian method,
%%while addressing the possibility that integrality restrictions may be present in any stage,
%we focus on an iterative approach based on ADMM, which,
When specialized to the deterministic equivalent problem~\eqref{EqSMIPEF} \blue{in the context of stochastic programming}, ADMM is referred to as Progressive Hedging (PH)~\cite{RockafellarWets1991PH,WatsonWoodruff2011PH}. When the sets $\K_\s$, $\s \in \SC$, are convex, the limit points of the sequence of solution-multiplier pairs $\braces{((x^k,y^k,\z^k),\omega^k)}_{k=1}^\infty$ generated by PH are saddle points of the deterministic equivalent
problem~\eqref{EqSMIPEF}, whenever such saddle points exist. When the constraints $(x_\s,y_\s) \in \K_\s$, $\s \in \SC$, enforce nontrivial mixed-integer restrictions,
the set $\K_\s$ is not convex and
%. Consequently, the deterministic equivalent problem~\eqref{EqSMIPEF} typically does not have saddle points, nonzero duality gaps may exist, and
PH becomes a heuristic approach with no guarantees of convergence~\cite{LokketangenWoodruff1996}. Nevertheless, some measure of success\red{, in practice,} has been observed in~\cite{WatsonWoodruff2011PH} while applying PH to problems of the form~\eqref{EqSMIPEFS}.  More recently,~\cite{GadeEtAl2016} showed that valid Lagrangian lower bounds can be calculated from the iterates of the PH algorithm when the sets $\K_\s$ are not convex. However, their implementation of the algorithm does not offer any guarantee that the lower bounds will converge to the optimal value $\optval^{LD}$.  Moreover, additional computational effort, in solving additional MILP subproblems, must be expended, in order to compute the lower bound. Our contribution is to extend the PH-based approach in~\cite{GadeEtAl2016}, creating an algorithm whose lower bound values converge to $\optval^{LD}$\red{, in theory,} and for which lower bound calculations do not require additional computational effort. Computational results in Section~\ref{Sec4} demonstrate that the new method outperforms the existing PH-based method, in terms of both quality of bound and efficiency of computation.

To motivate our approach, we first consider the application of PH to the following well-known primal characterization of $\optval^{LD}$:
\begin{equation}\label{EqSMIPEFSConv}
  \optval^{LD} = \min_{x,y,\z} \braces{\sum_{\s \in \SC} p_\s(c^\top x_\s + q_\s^\top y_\s) \st (x_\s,y_\s) \in \conv(\K_\s), \;
    \exclude{\forall \s \in \SC, } x_\s = \z,\; \forall \s \in \SC},
\end{equation}
where $\conv(\K_\s)$ denotes the convex hull of $\K_\s$ for each $\s \in \SC$. (See, for example, Theorem 6.2 of~\cite{NemhauserWolsey1988}.)
% or Theorem 4.22 of~\cite{Ruszczynski2006}.)
The sequence of Lagrangian bounds $\braces{\phi(\omega^k)}$ generated
by the application of PH to~\eqref{EqSMIPEFSConv} is known to be
convergent. Thus, the value of the Lagrangian dual, $\optval^{LD}$, may, in theory, be computed by applying PH to~\eqref{EqSMIPEFSConv}. However, in practice, an explicit polyhedral description of $\conv(\K_\s)$, $\s \in \SC$ is, generally, not available, thus raising the issue of implementability.
%The application of PH to problem~\eqref{EqSMIPEFSConv} is implementable if an explicit polyhedral description of each set $\conv(\K_\s)$, $\s \in \SC$, is available.

The absence of such an explicit description motivates an application of a solution approach to the PH primal update step that iteratively constructs an improved inner approximation of each $\conv(\K_\s)$, $\s \in \SC$.
For this purpose, we apply a solution approach to the PH primal update
problem that is based on the Frank-Wolfe (FW)
method~\cite{FrankWolfe1956}.
%,LevitinPolyak1966,Dunn1979,Dunn1980,Bertsekas1999,FreundGrigas2014}.
%(For convergence rate analysis of the FW method, see~\cite{Dunn1979,Dunn1980,Bertsekas1999}, and more recently,~\cite{FreundGrigas2014} and references therein.)
Our approach has the additional benefit of
providing Lagrangian bounds at no additional computational cost.

One simple, theoretically-supported integration of a FW-like method and
PH is realized by having the PH primal updates computed using a method
called the Simplicial Decomposition Method
(SDM)~\cite{Holloway1974,Hohenbalken1977}.  %,HearnEtAl1987,VenturaHearn1993,Bertsekas1999,BertsekasYu2011}.
SDM is an extension of the FW method that makes use of progressively-improving inner approximations to each set $\conv(\K_\s)$, $\s \in
\SC$. The finite optimal convergence of each application of SDM
follows directly from the polyhedral structure
$\conv(\K_\s)$, and the (practically reasonable) assumption that $\conv(\K_\s)$ is bounded for each $\s \in \SC$.
\todo[inline]{Where do we assume/state that $\conv(K_s)$ is bounded?  Proof of convergence in Proposition 1 relies on boundedness. (I thought we just needed it to be polyhedral, but I am happy to assume bounded for this paper)}

For computing improvements in the Lagrangian bound efficiently,
convergence of SDM to the optimal solution of the subproblem is too
costly and not necessary.  We \red{thus} develop a modified integration whose
theoretically-supported convergence analysis is based not on the optimal convergence of SDM, but rather on its ability to adequately extend the inner approximations of each $\conv(\K_\s)$, $\s \in \SC$.
The main contribution of this paper is the development, convergence
analysis, and application of a new algorithm, called FW-PH, which is used to compute high-quality Lagrangian bounds for SMIPs efficiently and with a high potential for parallelization. FW-PH is efficient in that, under mild assumptions, each dual update and Lagrangian bound computation may be obtained by solving, for each $\s \in \SC$, just one MILP problem and one continuous \red{convex} quadratic problem.
%the solution of at most two (usually one) linear mixed-integer subproblem per scenario and one continuous quadratic subproblem per scenario;
In contrast, each dual update of PH requires the solution of a mixed-integer quadratic programming (MIQP) subproblem for each $\s \in \SC$, and each PH Lagrangian bound computation requires the solution of one MILP subproblem for each $\s \in \SC$. In our convergence analysis, conditions are provided under which the sequence of Lagrangian bounds generated by FW-PH converges to the optimal Lagrangian bound $\optval^{LD}$. To the best of our knowledge,
the combination of PH and FW in a manner that is theoretically supported, computationally efficient, and parallelizable is new, in spite of the convergence analyses of both PH and FW being well-developed.
%We also provide additional motivation to revisit a  research direction in ADMM developed in~\cite{RaghunathanEtAl2014} that would lead to a strengthening of the theoretical support for FW-PH.
%Under this rationale, the most useful iterations of FW-PH are the early (as opposed to tail-end) iterations of FW-PH, where the most improvement in the Lagrangian bound is realized. The theoretically supported implementation of FW-PH is understood to inform the implementation of a version of FW-PH that is useful for a given application. One such implementation is provided in the second contribution, where we present a computationally enhanced version of FW-PH that provides higher quality Lagrangian bounds in practice during the early iterations.
%%

This paper is organized as follows. In Section \ref{Sec2}, we present
the theoretical background of PH and a brief technical lemma regarding
the inner approximations generated by SDM; this background is
foundational for the proposed FW-PH method. In Section 3, we present
the FW-PH method and a convergence analysis.
%Section \ref{Sec3} presents the proposed method and discusses convergence (for the first variant algorithm) and algorithm aspects (for the second variant algorithm).
The results of numerical experiments comparing the Lagrangian bounds computed with PH and those with FW-PH are presented in Section \ref{Sec4}.
We conclude in Section \ref{Sec5} with a discussion of the results obtained and with suggested directions for further research.

%\pagebreak
% \section{Progressive Hedging} \label{Sec2}
%\section{The foundation for combining the Progressive Hedging and Frank-Wolfe based methods} \label{Sec2}
\section{Progressive Hedging and Frank-Wolfe-Based Methods} \label{Sec2}

The \linebreak
Augmented Lagrangian (AL) function based on the relaxation of the nonanticipativity constraints $x_\s = \z$, $\s \in \SC$, is
$$L^\rho(x,y,\z,\omega):=\sum_{\s \in \SC} p_\s L_\s^\rho(x_s,y_s,\z,\omega_\s),$$
where
$$L_\s^\rho(x_s,y_s,\z,\omega_\s):=c^\top x_\s + q_\s^\top y_\s+ \omega_\s^\top (x_\s - \z) + \frac{\rho}{2}\norm{x_\s - \z}_2^2$$
and $\rho>0$ is a penalty parameter.  By changing the feasible region, denoted here by $\F_\s$, $\s\in\SC$, the Augmented Lagrangian can be used in a Progressive Hedging approach to solve either problem~\eqref{EqSMIPEFS} or problem~\eqref{EqSMIPEFSConv}.  Pseudocode for the PH algorithm is given in Algorithm~\ref{AlgPH}.

\begin{algorithm}[hbtp]
\caption{PH applied to problem~\eqref{EqSMIPEFS} ($\F_\s=\K_s$\exclude{, $\s \in \SC$}) or~\eqref{EqSMIPEFSConv} ($\F_\s=\conv(\K_\s)$\exclude{, $\s \in\SC$}).  \label{AlgPH}}
\begin{algorithmic}[1]
\State Precondition: $\sum_{\s \in \SC} p_\s \omega_\s^{0} = 0$
\Function{PH}{$\omega^0$, $\rho$, $k_{max}$, $\epsilon$}
 \For{$\s \in \SC$} \label{FWPH0InitBeginPH}
    \State $({x}_\s^0,{y}_s^0) \in \argmin_{x,y} \braces{(c+\omega_\s^0)^\top x + q_\s^\top y \st (x,y) \in \F_\s}$
  \EndFor
    	%\State $V_\s \gets [\widetilde{x}^0;\widetilde{y}^0]$ for $\s \in \SC$
   \State $\phi^0 \gets \sum_{\s \in \SC} p_\s\left[ (c+\omega_\s^0)^\top x_\s^0 + q_\s^\top y_\s^0 \right]$
    \State $\z^0 \gets  \sum_{\s \in \SC} p_\s x_\s^0$
    \State $\omega_\s^{1} \gets \omega_\s^{0} + \rho(x_\s^{0}-\z^{0})$ for all $\s \in \SC$ \label{FWPH0InitEndPH}
    \For{$k=1,\dots,k_{max}$}
	    \For {$\s \in \SC$}
	       %\State $\omega_\s^{k} \gets \omega_\s^{k-1} + \rho(x_\s^{k-1}-\z^{k-1})$
    	       \State $\phi_\s^k \gets \min_{x,y} \braces{(c+\omega_\s^k)^\top x + q_\s^\top y \st (x,y) \in \F_\s}$ \label{ComputePhiPHS}
	        \State $(x_\s^k,y_\s^k) \in \argmin_{x,y} \braces{L_\s^{\rho}(x,y,\z^{k-1},\omega_\s^{k}) \st (x,y) \in \F_\s }$ \label{PHSP}
	    \EndFor
	    \State $\phi^k \gets \sum_{\s \in \SC} p_\s\phi_\s^k$ \label{ComputePhiPH}
	   \State $\z^k \gets \sum_{\s \in \SC} p_\s x^k_\s$
	   \If{$\sqrt{\sum_{\s \in \SC} p_\s \norm{x_\s^k-\z^{k-1}}_2^2} < \epsilon $}
		\State {\bf return} { $(x^{k},y^{k},\z^{k}, \omega^{k}, \phi^k)$}
	\EndIf
	\State $\omega_\s^{k+1} \gets \omega_\s^{k} + \rho(x_\s^{k}-\z^{k})$ for all $\s \in \SC$

    \EndFor
    \State {\bf return} { $(x^{k_{max}},y^{k_{max}},\z^{k_{max}}, \omega^{k_{max}}, \phi^{k_{max}})$}
\EndFunction
\end{algorithmic}
\end{algorithm}
In Algorithm~\ref{AlgPH}, $k_{max} > 0$ is the maximum number of iterations, and $\epsilon > 0$ parameterized the convergence tolerance.
%Furthermore, the explicit calculation of the Lagrangian bounds $\phi^k$, which is not normally part of the PH algorithm, are stated in Lines~\ref{ComputePhiPHS} and~\ref{ComputePhiPH}.
The initialization of Lines~\ref{FWPH0InitBeginPH}--\ref{FWPH0InitEndPH} provides an initial target primal value $z^0$ and dual values $\omega_\s^1$, $\s \in \SC$, for the main iterations $k \ge 1$.
Also, an initial Lagrangian bound $\phi^0$ can be computed from this initialization.
For $\epsilon > 0$, the Algorithm~\ref{AlgPH} termination criterion $\sqrt{\sum_{\s \in \SC} p_\s \norm{x_\s^k-\z^{k-1}}_2^2} < \epsilon$
is motivated by the addition of the squared norms of the primal and dual residuals associated with problem~\eqref{EqSMIPEFS}. (See Section 3.3 of~\cite{BoydEtAl2011}.)
In summing the squared norm primal residuals $p_\s \norm{x_\s^k -  \z^{k}}_2^2$, $\s \in \SC$, and the squared norm dual residual $\norm{\z^k - \z^{k-1}}_2^2$, we have
\begin{equation}\sum_{\s \in \SC}p_\s \left[\norm{x_\s^k -  \z^{k}}_2^2 +\norm{\z^k - \z^{k-1}}_2^2\right]
=\sum_{\s \in \SC}  p_\s\norm{x_\s^k-\z^{k-1}}_2^2 \label{PDDiscr}
\end{equation}
The equality in~\eqref{PDDiscr} follows since, for each $\s \in \SC$, the cross term resulting from the expansion of the squared norm
$
\norm{(x_\s^k-\z^k) + (\z^k -\z^{k-1})}_2^2
 $
 vanishes; this is seen in the equality $\sum_{\s \in \SC} p_\s(x_\s^k - \z^{k})=0$ due to the construction of $z^k$.

The Line~\ref{ComputePhiPHS} subproblem of Algorithm~\ref{AlgPH} is an addition to the original PH algorithm. Its purpose is to compute Lagrangian bounds (Line~\ref{ComputePhiPH}) from the current dual solution $\omega^k$~\cite{GadeEtAl2016}.
Thus, the bulk of computational effort in Algorithm~\ref{AlgPH} applied to problem~\eqref{EqSMIPEFS} (the case with $\F_\s=\K_\s$) resides in
computing solutions to the MILP (Line~\ref{ComputePhiPHS}) and MIQP (Line~\ref{PHSP}) subproblems. \blue{Note that Line~\ref{ComputePhiPHS} may be omitted if the corresponding Lagrangian bound for $\omega^k$ is not desired.}

 \subsection{Convergence of PH}
 The following proposition addresses the convergence of PH applied to problem~\eqref{EqSMIPEFSConv}.%~\eqref{eq:zldprob}.
 \begin{proposition}\label{PropPH}
Assume that problem~\eqref{EqSMIPEFSConv} is feasible with $\conv(\K_\s)$ bounded for each $\s \in\SC$, and let Algorithm~\ref{AlgPH} be applied to problem~\eqref{EqSMIPEFSConv} (so that $\F_\s = \conv(\K_\s)$ for each $\s \in \SC$) with tolerance $\epsilon=0$ for each $k \ge 1$.
%Then the following holdit has an optimal value $\optval^{SMIP}$ and at least one optimal solution $(x_\s^*,y_\s^*)$, $\s \in \SC$ and $z^*$.
Then, the limit $\lim_{k \to \infty} \omega^k = \omega^*$ exists, and furthermore,
\begin{enumerate}
\item $\lim_{k \to \infty} \sum_{\s \in \SC} p_\s (c^\top x_\s^k + q_\s^\top y_\s^k) = \optval^{LD}$,
\item $\lim_{k\to\infty} \phi(\omega^k) = \optval^{LD}$,
\item $\lim_{k \to \infty} (x_s^k - z^k)=0$ for each $\s \in \SC$,
\end{enumerate}
and each limit point $(((x_\s^*,y_\s^*)_{\s \in \SC}, \z^*)$ is an optimal solution for~\eqref{EqSMIPEFSConv}.
 \end{proposition}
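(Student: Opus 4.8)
The plan is to observe that, with $\F_\s=\conv(\K_\s)$, Algorithm~\ref{AlgPH} is precisely the two-block (consensus-form) ADMM applied to the convex program~\eqref{EqSMIPEFSConv}: the first block updates $(x_\s,y_\s)_{\s\in\SC}$ over the compact polytopes $\conv(\K_\s)$, the second block updates the consensus variable $\z$, and the multiplier step is standard dual ascent. First I would record the structural facts I will need. Since each $\conv(\K_\s)$ is a nonempty compact polytope and~\eqref{EqSMIPEFSConv} is feasible, an optimal primal solution exists; linear-programming strong duality then furnishes a saddle point $((x^*,y^*,\z^*),\omega^*)$ of the (non-augmented) Lagrangian, characterized by primal feasibility $x_\s^*=\z^*$, dual feasibility $\sum_{\s\in\SC}p_\s\omega_\s^*=0$, the scenariowise optimality $(x_\s^*,y_\s^*)\in\argmin\{(c+\omega_\s^*)^\top x+q_\s^\top y : (x,y)\in\conv(\K_\s)\}$, and $\phi(\omega^*)=\optval^{LD}$. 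I would also check, by induction using $\z^k=\sum_{\s\in\SC}p_\s x_\s^k$, that every iterate remains dual feasible, $\sum_{\s\in\SC}p_\s\omega_\s^k=0$; this guarantees $\phi(\omega^k)=\sum_{\s\in\SC}p_\s\phi_\s(\omega_\s^k)$ and that the $\z$-update is the exact minimizer of $L^\rho$ in $\z$.

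Next I would introduce the standard ADMM Lyapunov function measuring distance to the fixed saddle point,
\[ V^k := \frac{1}{\rho}\sum_{\s\in\SC}p_\s\norm{\omega_\s^k-\omega_\s^*}_2^2 + \rho\norm{\z^{k-1}-\z^*}_2^2, \]
and establish the descent inequality
\[ V^k - V^{k+1} \;\ge\; \rho\sum_{\s\in\SC}p_\s\norm{x_\s^k-\z^k}_2^2 + \rho\norm{\z^k-\z^{k-1}}_2^2 . \]
This is obtained by combining the first-order (variational) optimality inequalities of the $x$- and $\z$-subproblems, tested against the saddle point, with the dual update rule; the cross-term cancellation already recorded in~\eqref{PDDiscr} is exactly what keeps the residual bookkeeping clean, and it identifies the right-hand side with $\rho\sum_{\s\in\SC}p_\s\norm{x_\s^k-\z^{k-1}}_2^2$. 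Since $V^k\ge 0$ and nonincreasing, it converges, forcing its successive differences, and hence the right-hand side, to zero. This immediately yields $x_\s^k-\z^k\to 0$ for every $\s\in\SC$ (item 3) together with $\z^k-\z^{k-1}\to 0$.

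The descent inequality also shows that $\{(\z^k,\omega^k)\}$ is Fej\'er monotone with respect to the nonempty set of saddle points, hence bounded; compactness of the $\conv(\K_\s)$ then bounds $\{(x^k,y^k)\}$ as well. I would next argue that every limit point of $\{(x^k,y^k,\z^k,\omega^k)\}$ is a saddle point: primal feasibility $x_\s^*=\z^*$ follows from $x_\s^k-\z^k\to 0$ and closedness of $\conv(\K_\s)$, dual feasibility passes to the limit, and taking limits in the $x$-subproblem optimality inequality (using $\z^{k-1}\to\z^*$ and convergence of the multipliers) shows the limiting primal point minimizes the Lagrangian over $\conv(\K_\s)$ and is therefore optimal for~\eqref{EqSMIPEFSConv} (the final claim). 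Fej\'er monotonicity together with the property that all limit points lie in the target set is the hypothesis of the standard Opial-type lemma, which upgrades subsequential to full-sequence convergence; applying it gives $\lim_{k\to\infty}\omega^k=\omega^*$. Item 2 is then immediate from continuity of the piecewise-affine concave function $\phi$ at $\omega^*$, giving $\phi(\omega^k)\to\phi(\omega^*)=\optval^{LD}$, and item 1 follows from the standard ADMM objective-convergence conclusion~\cite{BoydEtAl2011}, the optimal value of~\eqref{EqSMIPEFSConv} being $\optval^{LD}$.

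The main obstacle is the passage from subsequential to full-sequence convergence of $\omega^k$. The descent inequality by itself delivers only residual and objective convergence; pinning the iterates to a single limit $\omega^*$ requires the Fej\'er-monotonicity argument, which in turn presupposes that all limit points have been identified as saddle points. The delicate technical point there is taking the limit in the variational optimality condition of the nonsmooth $x$-subproblem, which relies on compactness of each $\conv(\K_\s)$ and closedness of the associated $\argmin$ correspondence; this is precisely the place where the boundedness hypothesis on $\conv(\K_\s)$ is used.
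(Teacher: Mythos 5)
Your proposal is correct, but it takes a genuinely different route from the paper. The paper's proof is essentially a citation: it verifies that feasibility of \eqref{EqSMIPEFSConv}, linearity of the objective, and the bounded polyhedral structure of the sets $\conv(\K_\s)$ place the problem squarely within the hypotheses of Theorem 5.1 of \cite{RockafellarWets1991PH} (the convergence theorem for progressive hedging on convex problems), and then reads off every conclusion of Proposition~\ref{PropPH} --- convergence of $\omega^k$, of the objective values, of $\phi(\omega^k)$, vanishing of $x_\s^k-\z^k$, and optimality of the limit points, whose existence follows from boundedness --- directly from that theorem. You instead re-derive this convergence from first principles, treating Algorithm~\ref{AlgPH} as probability-weighted two-block ADMM: existence of a saddle point via LP duality, the Lyapunov function $V^k$, the descent inequality, Fej\'er monotonicity with respect to the saddle-point set, identification of subsequential limits through the variational inequalities of the subproblems, and the Opial-type upgrade to full-sequence convergence of the multipliers. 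Both arguments are sound. The paper's route buys brevity and delegates the hard analysis to a theorem tailored exactly to PH (including the probability-weighted norms); your route buys self-containedness and transparency, making explicit where each hypothesis enters --- in particular the inductive check that $\sum_{\s\in\SC}p_\s\omega_\s^k=0$, which is what makes the averaging step the exact $\z$-minimizer of the augmented Lagrangian (so the algorithm really is ADMM), and the role of compactness of $\conv(\K_\s)$ in passing to limits in the subproblem optimality conditions. The one ingredient you invoke without derivation, the weighted descent inequality, is a routine adaptation of the analysis in \cite{BoydEtAl2011}, so this is a dependence on a different external source rather than a gap.
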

 \begin{proof}
   Since the constraint sets $\F_\s=\conv(\K_\s)$, $\s \in \SC$, are bounded, and problem~\eqref{EqSMIPEFSConv} is feasible, problem~\eqref{EqSMIPEFSConv} has an optimal solution $((x_\s^*,y_\s^*)_{\s \in \SC},z^*)$ with optimal value $\optval^{LD}$. The feasibility of problem~\eqref{EqSMIPEFSConv}, the linearity of its objective function, and the bounded polyhedral structure of its constraint set $\F_\s=\conv(\K_\s)$, $\s \in \SC$, imply that
   the hypotheses for PH convergence to the optimal solution are met (See Theorem 5.1 of~\cite{RockafellarWets1991PH}). Therefore, $\braces{\omega^k}$ converges to some $\omega^*$, $\lim_{k \to \infty} \sum_{\s \in \SC} p_\s (c^\top x_\s^k + q_\s^\top y_\s) = \optval^{LD}$, $\lim_{k\to\infty} \phi(\omega^k) = \optval^{LD}$, and $\lim_{k \to \infty} (x_s^k - z^k)=0$ for each $\s \in \SC$ all hold. The boundedness of each $\F_\s = \conv(\K_\s)$, $\s \in \SC$, furthermore implies the existence of limit points $((x_\s^*,y_\s^*)_{\s \in \SC},z^*)$ of $\braces{((x_\s^k,y_\s^k)_{\s \in \SC},z^k)}$, which are optimal solutions for~\eqref{EqSMIPEFSConv}.
 \end{proof}
Note that the convergence in Proposition~\ref{PropPH} applies to the continuous problem~\eqref{EqSMIPEFSConv} but {\it not} to the mixed-integer problem~\eqref{EqSMIPEFS}.  In problem~\eqref{EqSMIPEFS}, the constraint sets $\K_\s$, $\s \in \SC$, are not convex, so there is no guarantee that Algorithm~\ref{AlgPH} will converge when applied to~\eqref{EqSMIPEFS}. However, the application of PH to problem~\eqref{EqSMIPEFSConv} requires\red{, in  Line~\ref{PHSP},} the optimization of the Augmented Lagrangian over the sets $\conv(\K_\s)$, $\s \in \SC$, for which an explicit linear description is unlikely to be known.  In the next section, we demonstrate how to circumvent this difficulty by constructing inner approximations of the polyhedral sets $\conv(\K_\s)$, $\s \in \SC$.

\subsection{A Frank-Wolfe approach based on Simplicial Decomposition}

To use Algorithm~\ref{AlgPH} to solve \eqref{EqSMIPEFSConv} requires a method for solving the subproblem
\begin{equation}
\label{eq:quadch}
(x_\s^k,y_\s^k) \in \argmin_{x,y} \braces{ L_\s^{\rho}(x,y,\z^{k-1},\omega_\s^{k}) : (x,y) \in \conv(\K_\s) }
\end{equation}
appearing in Line \ref{PHSP} of the algorithm. Although an explicit description of $\conv(\K_\s)$ is not readily
available, if we have a {\it linear} objective function, then we can replace $\conv(\K_\s)$ with $\K_\s$. This motivates
the application of a FW algorithm for solving \eqref{eq:quadch}, since the FW algorithm solves a sequence of
problems in which the nonlinear objective is linearized using a first-order approximation.

The Simplicial Decomposition Method (SDM) is an extension of the FW method,
where the line searches of FW are replaced by searches over polyhedral inner approximations.
SDM can be applied to solve a
feasible, bounded problem of the general form
\begin{equation}\label{EqFWProblem}
\optval^{FW}:=\min_x \braces{f(x): x \in \F},
\end{equation}
{with nonempty compact convex set $\F$ and continuously differentiable convex function $f$}.
Generically, given a current solution $x^{t-1}$ and inner approximation $\F^{t-1} \subseteq \F$, iteration $t$ of the SDM consists of solving
\[
\widehat{x} \in \argmin_{x} \braces{\nabla_x f(x^{t-1})^\top x  : x \in \F}
\]
updating the inner approximation as $\F^t \gets \conv(\F^{t-1} \cup \braces{\widehat{x}})$, and finally
choosing
\[ 	x^{t} \in \argmin_x \braces{f(x) : x \in \F^t} . \]
The algorithm terminates when the bound gap is small, specifically, when \red{$$\Gamma^t :=-\nabla_x f(x^{t-1})^\top (\widehat{x}-x^{t-1}) < \tau,$$} where $\tau \geq 0$ is a given tolerance.

%The SDM method applied to problem~\eqref{EqFWProblem} is stated in Algorithm~\ref{AlgSDM},
%Given a starting feasible solution $x^0 \in \F$, compute at each iteration $t\ge 1$:
%\begin{enumerate}
%\item solution updates $x^{t}$ satisfying the condition
%$$
%f(x^{t}) \le f(x^{t-1} + \alpha^t d^t)
%$$
%where $\alpha^t \in [0,1]$ and $d^t := \widehat{x}-x^{t-1}$ with
%\begin{equation}\label{EqFWVertexSP}
%\widehat{x} \in \argmin_x \braces{\nabla_x f(x^{t-1})x : x \in \F}.
%\end{equation}
%\item bound computations
%\begin{align*}
%\beta_{UB}^t &= f(x^{t-1}) \\
%\beta_{LB}^t &= f(x^{t-1}) + \nabla_x f(x^{t-1})(\widehat{x}-x^{t-1})
%\end{align*}
%\item FW gap $\Gamma^t := \beta_{UB}^t - {\beta_{LB}^t}$.
%%(While $\beta_{UB}^k$ is monotonically nonincreasing,
%%the sequence $\braces{\beta_{LB}^k}$ is not monotonically nondecreasing, and hence, $\Gamma^k$ is defined as it is.)
%\end{enumerate}
%
%The FW method applied to~\eqref{EqFWProblem} is stated formally in Algorithm~\ref{AlgFW}.

The application of SDM to solve problem \eqref{eq:quadch}, i.e., to minimize $L_\s^{\rho}(x,y,\z,\omega_\s)$ over $(x,y) \in \conv(\K_\s)$, for a given $\s\in\SC$, is presented in Algorithm \ref{AlgSDM}.
Here, $t_{max}$ is the maximum number of iterations and $\tau > 0$ is a convergence tolerance. $\Gamma^t$ is the bound gap used to measure closeness to optimality,  and $\phi_s$ is used to compute a Lagrangian bound
as described in the next section. The inner approximation to $\conv(\K_\s)$ at iteration $t\geq 1$ takes the form $\conv(\V^t_\s)$, where $\V_\s^t$ is a finite set of points, with $\V^t_\s\subset\conv(\K_\s)$. The points added by Algorithm~\ref{AlgSDM} to the initial set, $\V^0_\s$, to form $\V^t_\s$, are all in $\K_\s$: here $\mathcal{V}(\conv(\K_\s))$ is the set of extreme points of $\conv(\K_\s)$ and, of course, $\mathcal{V}(\conv(\K_\s))\subseteq K_s$.

\begin{algorithm}[h]
\caption{SDM applied to problem \eqref{eq:quadch}.\label{AlgSDM}}
\begin{algorithmic}[1]
\State Precondition: $\V^0_\s\subset\conv(\K_\s)$ and $z=\sum_{\s \in \SC} p_\s x_\s^0$%, $\V^0_\s \neq \emptyset$
\Function{${\rm SDM}$}{$\V_s^0$, $x_s^0$, $\omega_s$, $z$, $t_{max}$, $\tau$}
       %\State $V \gets [x^0]$
       %\State $x^{0} \in \argmin_x \braces{f(x) : x \in \F^0}$ \label{SDMInit}
       \For{$t=1,\dots,t_{max}$}
		 \State $\widehat{\omega}^t_s \gets  \omega_s + \rho(x_s^{t-1} - z)$ \label{SDMomega}
		 %\State $\hat{c} \gets  c + \omega_s + \rho(x_s^{t-1} - z)$
	\State $(\widehat{x}_s,\widehat{y}_s) \in \argmin_{x,y} \braces{(c + \widehat{\omega}_s^t)^{\top} x + q_\s^\top y \st
	(x,y) \in  \mathcal{V}(\conv(\K_s))}$ \label{DirFindingSDM}
	\If{$t=1$}
		\State $\phi_s \gets (c + \widehat{\omega}_s^t)^{\top} \widehat{x}_s + q_\s^\top \widehat{y}_s$ \label{SDMEvalPhiS}
	\EndIf
	\State $\Gamma^t \gets -[(c + \widehat{\omega}_\s^t)^{\top}(\widehat{x}_\s-x_\s^{t-1}) + q_\s^{\top}(\widehat{y}_\s - y^{t-1}_\s)]$
	\State $\V_\s^t \gets\V_\s^{t-1} \cup \braces{(\widehat{x},\widehat{y})}$ \label{IAUpdate}
	%\State $d^t \gets \widehat{x}^t-x^t$
	%\State {\bf Set} $\phi^t \gets \nabla_x f(x^{t})\widehat{x}$ (Lagrangian lower bound)
	%\State Possibly remove some elements from the set $V
	%\State {\bf Compute}$x^{t+1}$ such that $f(x^{t+1}) \le f\left( x^{t} + \alpha^t d^t \right)$
	%\State $V \gets [V, \widehat{x}]$
	
       	\State $(x^{t}_s,y^{t}_s) \in \argmin_{x,y} \braces{L_s^{\rho}(x,y,z,\omega_s) \st (x,y) \in
			\conv(\V_s^{t})}$ \label{SDM-XYUpdate}%$x^t \gets \text{ComputeNextX}(x^{t-1},\widehat{x},V,t)$
       	\If{$\Gamma^t \le \tau$}
       		\State \textbf{return} { $\left(x_\s^{t}, y_\s^t, \V_\s^{t}, \phi_s \right)$}
       	\EndIf
%%       	\State {\bf Set} $x^t = Va^t$, $V \gets [x^t]$
%       	\If{$\Gamma^t < \tau$}
%       		\State \textbf{return} { $\left[x^{t+1}, \Gamma^{t} \right]$}
%       	\EndIf
        \EndFor
      \State \textbf{return} { $\left(x^{t_{max}}_\s, y^{t_{max}}_\s, \V_\s^{t_{max}}, \phi_\s \right)$}
\EndFunction
\end{algorithmic}
\end{algorithm}
\noindent

Observe that
$$
  \nabla_{(x,y)} L_\s^{\rho}(x,y,\z,\omega_\s)|_{(x,y)=\left(x^{t-1}_\s,y^{t-1}_\s\right)} = \left[\begin{array}{c}c+\omega_\s + \rho(x^{t-1}_\s - z) \\ q_\s\end{array}\right]= \left[\begin{array}{c} c+\widehat{\omega}_\s\\ q_\s\end{array}\right],
$$
and so the optimization at Line \ref{DirFindingSDM} is minimizing the gradient approximation to \linebreak
 $L_\s^{\rho}(x,y,\z,\omega_\s)$ at the point $(x^{t-1}_\s,y^{t-1}_\s)$. Since this is a linear objective function, optimization over $\mathcal{V}(\conv(\K_\s))$ can be accomplished by optimization over $\K_\s$ (see, e.g., \cite{NemhauserWolsey1988}, Section I.4, Theorem 6.3). Hence Line~\ref{DirFindingSDM} requires a solution of a single-scenario MILP.

The optimization at Line~\ref{SDM-XYUpdate} can be accomplished by expressing $(x,y)$ as a convex combination of the finite set of points, $\V^t_\s$, where the weights \green{$a\in \reals^{|\V^t_\s|}$} in the convex combination are now \green{also} decision variables. %, and $(x,y)$ is substituted out in the objective function.
\green{That is, the Line~\ref{SDM-XYUpdate} problem is solved with a solution to the following convex continuous quadratic subproblem
\begin{equation}\label{QPConvexComb}
(x^{t}_s,y^{t}_s,a) \in \argmin_{x,y,a} \braces{\begin{array}{c} L_s^{\rho}(x,y,z,\omega_s) \st (x,y) = \sum_{(\widehat{x}^i,\widehat{y}^i) \in \V^t_\s} a_i  (\widehat{x}^i,\widehat{y}^i), \\ \sum_{i=1,\dots,|\V^t_\s|} a_i = 1, \;\text{and}\; a_i \ge 0 \;\text{for}\; i=1,\dots,|\V^t_\s| \end{array}}.
\end{equation}
For implementational purposes, the $x$ and $y$ variables may be substituted out of the objective of problem~\eqref{QPConvexComb}, leaving $a$ as the only decision variable, with the only constraints being nonnegativity of the $a$ components and the requirement that they sum to $1$.}

The Simplicial Decomposition Method is known to terminate finitely
with an optimal solution when $\F$ is polyhedral
\cite{Holloway1974}, so the primal update step Line~\ref{PHSP},
Algorithm~\ref{AlgPH} with $\F_\s = \conv(\K_\s)$ could be
accomplished with the SDM, resulting in an algorithm that converges to
a solution giving the Lagrangian dual bound $\optval^{LD}$.  However,
since each inner iteration of Line~\ref{DirFindingSDM},
Algorithm~\ref{AlgSDM} requires the solution of a MILP, using $t_{\max}$ large enough to ensure SDM terminates optimally is not efficient for our purpose of
computing Lagrangian bounds.  In the next section, we give an
adapation of the algorithm that requires the solution of only {\em one} MILP
subproblem per scenario at each major iteration of the PH algorithm.

\section{The FW-PH method} \label{Sec3}

In order to make the SDM efficient when used with PH to solve the
problem \eqref{EqSMIPEFSConv}, the minimization of the Augmented
Lagrangian can be done approximately.  This insight can greatly reduce
the number of MILP subproblems solved at each inner iteration and
forms the basis of our algorithm FW-PH.  Convergence of FW-PH relies
on the following lemma, which states an important expansion property
of the inner approximations employed by the SDM.

\begin{lemma}\label{FiniteSDMTermination}
For any scenario $\s \in \SC$ and iteration $k \ge 1$, let Algorithm~\ref{AlgSDM} be applied to the minimization problem \eqref{eq:quadch} for any $t_{max} \ge 2$. For $1 \le
t < t_{max}$,
if
\begin{equation}\label{LemmaSDMEq0}
({x}_s^t,y_s^t) \not\in \argmin_{x,y} \braces{L_s^{\rho}(x,y,\z^{k-1},\omega_\s^{k}) \st (x,y) \in \conv(\K_\s)}
\end{equation}
holds, then $\conv(\V_s^{t+1}) \supset \conv(\V_s^t)$.
%% t=0 case doesn't seem to be used in proof
% The same implication holds for $t=0$ if we assume ${x}^0 \in \argmin_x \braces{f(x) : x \in {\F}^0}$.
\end{lemma}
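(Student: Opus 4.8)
The plan is to reduce the strict-inclusion claim to a single separation statement about the vertex added at iteration $t+1$. By Line~\ref{IAUpdate} of Algorithm~\ref{AlgSDM} we have $\V_s^{t+1} = \V_s^t \cup \{(\widehat{x},\widehat{y})\}$, where $(\widehat{x},\widehat{y})$ is the point produced at Line~\ref{DirFindingSDM} of iteration $t+1$ (this iteration exists because $t<t_{max}$ and $t_{max}\ge 2$). Hence $\conv(\V_s^{t+1}) \supseteq \conv(\V_s^t)$ automatically, and the inclusion is strict exactly when $(\widehat{x},\widehat{y}) \notin \conv(\V_s^t)$. So the whole lemma reduces to showing that, under hypothesis~\eqref{LemmaSDMEq0}, this freshly generated vertex lies outside the current inner approximation.

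First I would record the gradient identity already displayed in the excerpt: with $g := \nabla_{(x,y)} L_s^\rho(x,y,\z^{k-1},\omega_s^k)\big|_{(x,y)=(x_s^t,y_s^t)} = (c+\widehat{\omega}_s^{t+1},\, q_s)$ and $\widehat{\omega}_s^{t+1} = \omega_s^k + \rho(x_s^t - \z^{k-1})$, the Line~\ref{DirFindingSDM} subproblem at iteration $t+1$ is precisely the minimization of the linear functional $g^\top(\cdot)$. Because minimizing a linear objective over $\mathcal{V}(\conv(\K_s))$ and over $\conv(\K_s)$ yields the same optimizers (Theorem~6.3 of~\cite{NemhauserWolsey1988}, as noted in the excerpt, using boundedness of $\conv(\K_s)$), we get $(\widehat{x},\widehat{y}) \in \argmin\{g^\top(x,y) : (x,y) \in \conv(\K_s)\}$.

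The core is then a pair of first-order optimality conditions for the convex quadratic $L_s^\rho$. On one hand, $(x_s^t,y_s^t)$ solves the Line~\ref{SDM-XYUpdate} problem over the convex set $\conv(\V_s^t)$, so the variational inequality $g^\top\big((x,y)-(x_s^t,y_s^t)\big) \ge 0$ holds for every $(x,y) \in \conv(\V_s^t)$. On the other hand, hypothesis~\eqref{LemmaSDMEq0} says $(x_s^t,y_s^t)$ is \emph{not} a minimizer of $L_s^\rho$ over $\conv(\K_s)$; since $L_s^\rho$ is convex, the same characterization (now failing) produces a point $(\bar x,\bar y) \in \conv(\K_s)$ with $g^\top\big((\bar x,\bar y)-(x_s^t,y_s^t)\big) < 0$. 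Combining this with the minimality of $(\widehat{x},\widehat{y})$ for $g^\top(\cdot)$ over $\conv(\K_s)$ gives $g^\top(\widehat{x},\widehat{y}) \le g^\top(\bar x,\bar y) < g^\top(x_s^t,y_s^t)$, i.e. $g^\top\big((\widehat{x},\widehat{y})-(x_s^t,y_s^t)\big) < 0$.

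These two facts are incompatible for any point of $\conv(\V_s^t)$, which closes the argument: were $(\widehat{x},\widehat{y}) \in \conv(\V_s^t)$, the Line~\ref{SDM-XYUpdate} optimality would force $g^\top\big((\widehat{x},\widehat{y})-(x_s^t,y_s^t)\big) \ge 0$, contradicting the strict descent just derived. Therefore $(\widehat{x},\widehat{y}) \notin \conv(\V_s^t)$ and $\conv(\V_s^{t+1}) \supset \conv(\V_s^t)$. I do not expect a genuine obstacle; the steps demanding the most care are the index bookkeeping (the vertex forming $\V_s^{t+1}$ comes from the gradient evaluated at $(x_s^t,y_s^t)$, one loop iteration later) and correctly using \emph{both} directions of the first-order equivalence for convex problems — sufficiency, to extract a strict descent direction from non-optimality over $\conv(\K_s)$, and necessity, to bound $g^\top(\cdot)$ from below over $\conv(\V_s^t)$ — each of which rests on the convexity of $L_s^\rho$.
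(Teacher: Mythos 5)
Your proof is correct and follows essentially the same route as the paper's: both establish the variational inequality $\nabla L_s^\rho(x_s^t,y_s^t,\z^{k-1},\omega_s^k)^\top\bigl((x,y)-(x_s^t,y_s^t)\bigr)\ge 0$ over $\conv(\V_s^t)$ from Line~\ref{SDM-XYUpdate} optimality, use convexity to turn the hypothesis~\eqref{LemmaSDMEq0} into strict negativity of the minimum of that same linear functional over $\conv(\K_s)$, and observe that the Line~\ref{DirFindingSDM} point of iteration $t+1$ attains that negative minimum and hence cannot lie in $\conv(\V_s^t)$. The only cosmetic difference is that you route the strict inequality through an explicit witness $(\bar x,\bar y)$, whereas the paper states the negative minimum directly; the substance is identical.
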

\begin{proof}
For $\s \in \SC$ and $k \ge 1$ fixed, we know that by construction
$$({x}^t_s,y^t_s) \in \argmin_{x,y} \braces{L_\rho^s(x,y,\z^{k-1},\omega_\s^{k}) : (x,y) \in \conv(\V^t_s)}$$  for $t\geq 1.$
Given the convexity of $(x,y) \mapsto L_\rho^s(x,y,\z^{k-1},\omega_\s^{k})$ and the convexity of $\conv(\V^t_s)$,
the necessary and sufficient condition
for optimality
\begin{equation}\label{LemmaSDMEq1}
 \nabla_{(x,y)} L_\rho^s(x_s^t,y_s^t,\z^{k-1},\omega_\s^{k}) \left[ \begin{array}{c} x-x_s^t \\ y - y_s^t \end{array} \right] \ge 0 \quad \text{for all} \; (x,y) \in \conv(\V^t_s)
\end{equation}
is satisfied.
By assumption, condition~\eqref{LemmaSDMEq0} is satisfied, $\conv(\K_\s)$ is likewise convex, and so the resulting \emph{non}-satisfaction of the necessary and sufficient condition of optimality for the problem in~\eqref{LemmaSDMEq0} takes the form
\begin{equation}\label{LemmaSDMEq2}
\min_{x,y} \braces{\nabla_{(x,y)} L_\rho^s(x_s^t,y_s^t,\z^{k-1},\omega_\s^{k}) \left[ \begin{array}{c} x-x_s^t \\ y - y_s^t \end{array} \right] \st (x,y) \in \conv(\K_s) } < 0.
\end{equation}
In fact, during SDM iteration $t+1$, an optimal solution $(\widehat{x}_s,\widehat{y}_s)$ to the problem in condition~\eqref{LemmaSDMEq2} is computed in Line~\ref{DirFindingSDM} of Algorithm~\ref{AlgSDM}.
Therefore, by the satisfaction of condition~\eqref{LemmaSDMEq1} and the optimality of $(\widehat{x}_s,\widehat{y}_s)$ for the problem of condition~\eqref{LemmaSDMEq2}, which is also satisfied, we have $(\widehat{x}_s,\widehat{y}_s) \not\in \conv(\V^t_s)$. By construction, $\V^{t+1}_s \gets \V^t_s \cup \braces{(\widehat{x}_s,\widehat{y}_s)}$,
so that $\conv(\V^{t+1}_s) \supset \conv(\V^{t}_s)$ must hold.
\end{proof}

\exclude{
\begin{comment}
Section 3 FW-PH

Section 3.1: statement of FW-PH;
Section 3.2: short proof of convergence.
Section 3.3: in application

We have observed that it is better to do a limited number of FW iterations, at least early on. To fit this without our
convergence theory, I think we can state a version of the algorithm in which after some condition is reached (that is
guaranteed to be reached eventually) we switch to running the FW subproblem to ``convergence''. At the point of making
the switch the theory kicks in so we obtain convergence, no matter how we choose to do iterations before that.

We should also highlight an advantage of our approach compared to past attempts to apply PH is that at each iteration we
can obtain a dual bound without solving a separate subproblem. (E.g., in the previous attempts to use PH, they solved
the augmented MIQP subproblem for doing iterations, and to obtain a bound they would need to separately solve the MILP
subproblem.)
\end{comment}
}
%##########################################
%

%\subsection{Statement of FW-PH}

The FW-PH algorithm is stated in pseudocode-form in
Algorithm~\ref{AlgPHFW}.  Similar to Algorithm~\ref{AlgPH}, the
parameter $\epsilon$ is a convergence tolerance, and $k_{max}$ is the
maximum number of (outer) iterations.  The parameter $t_{max}$ is the
maximum number of (inner) SDM iterations in Algorithm~\ref{AlgSDM}.

The parameter $\alpha \in \reals$ affects the initial linearization
point $\tilde{x}_s$ of the SDM method.  Any value $\alpha \in \reals$
may be used, but the use of $\tilde{x}_s = (1 - \alpha) z^{k-1} +
\alpha x_s^{k-1}$ in Line \ref{InitialX} is a crucial component in the
efficiency of the FW-PH algorithm, as it enables the computation of
a valid dual bound, $\phi^k$, at \emph{each} iteration of FW-PH without
the need for additional MILP subproblem solutions.  Specifically, we have the following result.% (assuming the precondition $\sum_{\s\in\SC}p_\s\omega^0_\s=0$).% and $z=\sum_{\s \in \SC} p_\s x_\s^0$).

\begin{proposition}\label{PropFreeLB}
Assume that the precondition $\sum_{\s\in\SC}p_\s\omega^0_\s=0$ holds for Algorithm~\ref{AlgPHFW}.
At each iteration $k\ge 1$ of Algorithm~\ref{AlgPHFW}, the value, $\phi^k$, calculated at Line \ref{ComputePhi0}, is the value of the Lagrangian relaxation $\phi(\cdot)$ evaluated at a Lagrangian dual feasible point, and hence provides a finite lower bound on $\optval^{LD}$.
\end{proposition}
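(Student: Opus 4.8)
The plan is to show that the scalar $\phi^k$ produced at Line~\ref{ComputePhi0} is nothing other than the separable dual function $\phi(\cdot)$ evaluated at an explicitly identifiable multiplier $\bar{\omega}^k$, and then to check that $\bar{\omega}^k$ satisfies the dual feasibility condition $\sum_{\s\in\SC} p_\s \bar{\omega}_\s^k = 0$. Once both are in hand, the characterization of $\optval^{LD}$ in~\eqref{eq:zldprob} as a supremum of $\phi$ over the dual feasible set immediately yields $\phi^k = \phi(\bar{\omega}^k) \le \optval^{LD}$.

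First I would pin down the multiplier. At outer iteration $k$, Algorithm~\ref{AlgPHFW} calls Algorithm~\ref{AlgSDM} with $\z = \z^{k-1}$, $\omega_\s = \omega_\s^k$, and initial linearization point $x_\s^0 = \tilde{x}_\s = (1-\alpha)\z^{k-1} + \alpha x_\s^{k-1}$ from Line~\ref{InitialX}. On its first inner pass ($t=1$), Algorithm~\ref{AlgSDM} forms $\widehat{\omega}_\s^1 = \omega_\s^k + \rho(\tilde{x}_\s - \z^{k-1})$ at Line~\ref{SDMomega} and sets $\phi_\s$ at Line~\ref{SDMEvalPhiS} to the minimum of the linear objective $(c+\widehat{\omega}_\s^1)^\top x + q_\s^\top y$ over $\mathcal{V}(\conv(\K_\s))$ (Line~\ref{DirFindingSDM}). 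Because minimizing a linear function over $\mathcal{V}(\conv(\K_\s))$ is the same as minimizing it over $\K_\s$, this value equals $\phi_\s(\widehat{\omega}_\s^1)$ in the sense of~\eqref{eq:lagrangian-1s}. Substituting $\tilde{x}_\s$ and using $\tilde{x}_\s - \z^{k-1} = \alpha(x_\s^{k-1} - \z^{k-1})$, the relevant multiplier is
\[ \bar{\omega}_\s^k := \omega_\s^k + \rho\alpha(x_\s^{k-1} - \z^{k-1}), \]
so that, by the construction of $\phi^k$ at Line~\ref{ComputePhi0}, $\phi^k = \sum_{\s\in\SC} p_\s \phi_\s(\bar{\omega}_\s^k)$.

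Next I would verify dual feasibility of $\bar{\omega}^k$. Expanding,
\[ \sum_{\s\in\SC} p_\s \bar{\omega}_\s^k = \sum_{\s\in\SC} p_\s \omega_\s^k + \rho\alpha \sum_{\s\in\SC} p_\s (x_\s^{k-1} - \z^{k-1}). \]
The second sum vanishes because $\z^{k-1} = \sum_{\s\in\SC} p_\s x_\s^{k-1}$ and $\sum_{\s\in\SC} p_\s = 1$. For the first sum I would argue by induction that $\sum_{\s\in\SC} p_\s \omega_\s^k = 0$ for all $k \ge 0$: the base case is the stated precondition, and the inductive step uses the averaging dual update $\omega_\s^{k+1} = \omega_\s^k + \rho(x_\s^k - \z^k)$ together with $\z^k = \sum_{\s\in\SC} p_\s x_\s^k$, which again forces the correction term to carry zero $p$-weighted sum. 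Hence $\bar{\omega}^k$ is dual feasible. The decomposition~\eqref{eq:lagrangian-1} then applies and gives $\phi(\bar{\omega}^k) = \sum_{\s\in\SC} p_\s \phi_\s(\bar{\omega}_\s^k) = \phi^k$, so $\phi^k$ is indeed $\phi$ evaluated at a dual feasible point. Weak duality via the supremum characterization~\eqref{eq:zldprob} then gives $\phi^k \le \optval^{LD}$, and finiteness follows since each $\phi_\s(\bar{\omega}_\s^k)$ is the minimum of a linear function over the nonempty bounded set $\K_\s$, which is attained.

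The main obstacle---really the only substantive point---is recognizing that the bound is taken at the \emph{shifted} multiplier $\bar{\omega}^k$ rather than at the current iterate $\omega^k$, and then seeing that the particular form of the linearization point $\tilde{x}_\s$ in Line~\ref{InitialX} is exactly what makes the shift $\rho\alpha(x_\s^{k-1}-\z^{k-1})$ carry zero $p$-weighted sum, so that dual feasibility is preserved for \emph{every} value of $\alpha$. Everything else is bookkeeping: the identification $\phi_\s = \phi_\s(\bar{\omega}_\s^k)$ via linear optimization over extreme points, and the one-line induction maintaining $\sum_{\s\in\SC} p_\s \omega_\s^k = 0$.
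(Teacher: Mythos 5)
Your proposal is correct and follows essentially the same route as the paper's proof: identify the shifted multiplier $\widetilde{\omega}_\s^k = \omega_\s^k + \alpha\rho(x_\s^{k-1}-\z^{k-1})$ arising from the linearization point in Line~\ref{InitialX}, use linearity to replace $\mathcal{V}(\conv(\K_\s))$ by $\K_\s$ so that $\phi_\s^k = \phi_\s(\widetilde{\omega}_\s^k)$, and maintain $\sum_{\s\in\SC} p_\s\omega_\s^k = 0$ by induction so that the zero-mean correction term preserves dual feasibility for every $\alpha$. The only cosmetic difference is that you make the weak-duality and attainment steps fully explicit, which the paper leaves implicit.
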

\begin{proof}
Since $\sum_{\s\in\SC}p_\s\omega^0_\s=0$ holds and, by construction, $0 = \sum_{\s \in \SC} p_\s (x_\s^0 - z^0)$, we have $\sum_{\s\in\SC}p_\s\omega^1_\s=0$ also. We proceed by induction on $k \ge 1$.
At iteration $k$, the problem solved for each $\s\in\SC$ at Line \ref{DirFindingSDM} in the first iteration ($t=1$) of Algorithm \ref{AlgSDM} may be solved with
the same optimal value by exchanging $\mathcal{V}(\conv(\K_\s))$ for $\K_\s$; this follows from the linearity of the objective function.
Thus, an optimal solution computed at Line \ref{DirFindingSDM} may be used in the computation of $\phi_\s(\widetilde{\omega}_\s^k)$ carried out in Line~\ref{SDMEvalPhiS}, %corresponds to
where
\begin{eqnarray*}
  \widetilde{\omega}_\s^{k} & := & \widehat{\omega}_s^1 = \omega_\s^{k} +
  \rho(\widetilde{x}_\s - z^{k-1})
  = \omega_\s^{k} + \rho((1-\alpha)z^{k-1} + \alpha x_\s^{k-1} -
  z^{k-1})\\
  & = & \omega_\s^k + \alpha \rho(x_\s^{k-1} - z^{k-1}).
\end{eqnarray*}
%This follows since optimization over $\mathcal{V}(\conv(\K_\s))$ \red{yields the same optimal value as} optimization over $\K_\s$ \red{that is required for the evaluation of $\phi_\s(\widetilde{\omega}_\s^k)$}.
By construction, we have at each iteration $k \ge 1$ in Algorithm~\ref{AlgPHFW} that
%algorithmic construction, the points $((x_\s^{k-1})_{\s \in
%  \SC},z^{k-1})$ always satisfy the dual optimality condition
$$\sum_{\s \in \SC} p_s (x_\s^{k-1}- z^{k-1}) = 0 \quad \text{and}\quad \sum_{\s \in
  \SC} p_s \omega_s^k = 0,$$
which establishes that $\sum_{\s \in \SC}
p_s \widetilde{\omega}_\s^{k} = 0$. Thus, $\widetilde{\omega}^k$ is feasible for the Lagrangian dual problem, so that $\phi(\widetilde{\omega}^k)=\sum_{\s \in \SC} p_\s \phi_\s^k$, and, since each $\phi_s^k$ is the optimal value of a bounded and feasible mixed-integer linear program,
we have $-\infty < \phi(\widetilde{\omega}^k) < \infty$.
\end{proof}

\begin{algorithm}[t]
\caption{FW-PH applied to problem~\eqref{EqSMIPEFSConv}. \label{AlgPHFW}}
\begin{algorithmic}[1]
%\State Precondition: $\z^{0} \in \reals^{n}$, $\omega^{0} \in \prod_{\s \in \SC} \mathbb{R}^{n_1}$, $V_\s \in  \mathbb{R}^{(n_x+n_y)\times r_\s}, \s \in \SC$  with $r_\s \geq 0$
\Function{FW-PH}{$(\V_\s^0)_{\s \in \SC}$, $(x_\s^0,y_\s^0)_{\s \in \SC}$, $\omega^0$, $\rho$, $\alpha$, $\epsilon$, $k_{max}$, $t_{max}$}
%  \For{$\s \in \SC$} \label{FWPH0InitBegin}
%    \State $({x}_\s^0,{y}_s^0) \gets \argmin_{x,y}
%    \braces{(c+\omega_\s^0)^\top x + q_\s^\top y \st (x,y) \in \conv(\K_\s)}$
%    \State $\F_\s^0 \gets \conv(\F_\s^0 \cup \braces{({x}_\s^0,{y}_s^0)})$
%  \EndFor  \label{FWPH0InitEnd}
    	%\State $V_\s \gets [\widetilde{x}^0;\widetilde{y}^0]$ for $\s \in \SC$
	%\If{$t_{max}=0$} \label{PHFWt0Begin}
	%\State $\widehat{x} \gets x_{s'}^0$ \text for any $s' \in \SC$
	%\For{$\s \in \SC$}
%		\State $\widehat{y}_s \gets \argmin_y \{ q_s^{\top} y : (\widehat{x},y) \in K_s\}$
%		\State $D_s^0 \gets \conv(D_s^0 \cup \{(\widehat{x},\widehat{y}_s)\})$
%	\EndFor \label{PHFWt0End}
%	\EndIf
%    	 \State $\phi^0 \gets \sum_{\s \in \SC} p_\s\left[ (c+\omega_\s^0)^\top x_\s^0 + q_\s^\top y_\s^0 \right]$
    \State $\z^0 \gets  \sum_{\s \in \SC} p_\s x_\s^0$
    \State $\omega_\s^1 \gets \omega_\s^0 + \rho ({x}_\s^0 - \z^0)$, for $\s \in \SC$
    \For{$k=1,\dots,k_{max}$}
    		
	 	%\State {\bf Compute either}
	 	%\State \quad $\phi^k \gets \phi(\widetilde{\omega}^k)$ with $\widetilde{\omega}_\s^k \gets \omega_\s^{k-1} + \rho(x_\s^{k-1}-\z^{k-1})$, $\s \in \SC$ \quad{\bf or} $\phi^k \gets \phi({\omega}^{k-1})$
	 	
	 	%\State $[(x_\s^k,y_\s^k),\Gamma_\s^k] \gets \text{SDM}\left( L_\s^{\rho,\sigma}(\,\cdot\,,\,\cdot\,,\z^{k-1},y_\s^{k-1},\omega^{k-1}),\; \K_\s,\; (\widetilde{x}_\s^0,\widetilde{y}_\s^0),\; t_{max},\;\tau_k \right),\quad \s \in \SC$
	 	\For{$\s \in \SC$}
		\State $\widetilde{x}_s \gets (1-\alpha) z^{k-1} +
                \alpha x_s^{k-1}$ \label{InitialX}
	 	\State $[x_\s^k, y_\s^k, \V_s^k, \phi_\s^k]  \gets {\rm SDM}(\V_s^{k-1}, \widetilde{x}_s, \omega_s^k, z^{k-1}, t_{max},0)$ \label{SDMCall}
	 	\EndFor
	 \State $\phi^k \gets \sum_{\s \in \SC} p_\s\phi_\s^k$ \label{ComputePhi0}
    	\State $\z^k \gets \sum_{\s \in \SC} p_\s x_\s^k$
    	\If{$\sqrt{\sum_{\s \in \SC} p_\s \norm{x_\s^k-\z^{k-1}}_2^2} < \epsilon $}
		\State {\bf return} { $((x_\s^{k},y_\s^{k})_{\s \in \SC},\z^{k}, \omega^{k}, \phi^{k})$}
	\EndIf
	\State $\omega_\s^{k+1} \gets \omega_\s^{k} + \rho(x_\s^{k}-\z^{k})$, for $\s \in \SC$
    \EndFor
    \State {\bf return} { $\left((x_s^{k_{max}},y_s^{k_{max}})_{\s \in \SC},\z^{k_{max}}), \omega^{k_{max}}, \phi^{k_{max}}\right)$}
\EndFunction
\end{algorithmic}
\end{algorithm}

%\begin{algorithm}[h]
%\caption{Method $\Phi$ called inline with FW-PH. \label{AlgPhi}}
%\begin{algorithmic}[1]
%\State Precondition: $\sum_{\s \in \SC} p_\s \widetilde{\omega}_\s = 0$
%\Function{$\Phi$}{$\widetilde{\omega}_\s$, $\omega_\s$, $\F_\s$, $\z$}
%	 %\State $L_\s(x_\s,y_\s) \gets (c+\omega_\s^k)^\top x_\s  + q_\s^\top  y_\s$
	 %\State\!\!\!\!\!\!\!\!\!\!\!\!$(\widehat{x}_s,\widehat{y}_s)$ \!
%	 \State $\mathcal{D}^* \gets \argmin_{x,y} \braces{\begin{array}{c} (c+\widetilde{\omega}_\s)^\top x + q_\s^\top y   :   \\ (x,y) \in \conv(\K_\s)\end{array}}$ \label{DirFindingFWPH00}
%	 \State {\bf compute} $(\widehat{x}_s,\widehat{y}_s) \in \mathcal{D}^* \cap \mathcal{V}(\conv(\K_\s))$ \label{DirFindingFWPH0}
%	 \State $\phi_\s \gets (c+\widetilde{\omega}_\s)^\top \widehat{x}_s + q_\s^\top \widehat{y}_s$ %\label{ComputePhi0S}
%	 \State $\F_\s \gets \conv(\F_\s \cup \braces{ (\widehat{x}_\s, \widehat{y}_\s)})$ \label{FWPHUpdateIA0}
%	 \State $(x_\s^*,y_\s^*) \in \argmin_{x,y} \braces{\begin{array}{c}\!\!\! L_\s^\rho(x_\s,y_\s,\z,\omega_\s)  :\!\!  \\ (x_\s,y_\s) \in \F_\s\end{array}}$ \label{XYUpdate0}
%	 \State {\bf return} $\left[\phi_s, (x_\s^*,y_\s^*), \F_\s \right]$
%\EndFunction
%\end{algorithmic}
%\end{algorithm}

We establish convergence \red{of} Algorithm~\ref{AlgPHFW} for any $\alpha \in \reals$ and $t_{max} \ge 1$.
For the special case where we perform only one iteration of SDM for
each outer iteration ($t_{max}=1$), we require the additional
assumption that \blue{the initial scenario vertex sets share a common point. More precisely, we require the assumption}
\begin{equation}
\bigcap_{\s \in \SC} \Proj_x (\conv(\V_s^0)) \neq \emptyset \label{FeasCond}
\end{equation}
which can, in practice, be effectively handled through appropriate initialization, under the standard assumption of relatively complete recourse. \blue{We describe one initialization procedure in Section \ref{Sec4}}.

\begin{proposition}\label{PropFWPH-SMIP}
Let the convexified separable deterministic equivalent SMIP~\eqref{EqSMIPEFSConv} have an optimal solution,
and let Algorithm~\ref{AlgPHFW} be applied to~\eqref{EqSMIPEFSConv} with $k_{max}=\infty$, $\epsilon = 0$, $\alpha \in
\reals$, and $t_{max} \geq 1$.
If either $t_{max} \ge 2$ or \eqref{FeasCond} holds, then $\lim_{k \to \infty} \phi^k = \optval^{LD}$.
%the sequences $\braces{\F_\s^k}$, $\s \in \SC$, stabilize; that is, for each $\s \in \SC$, there exists $\bar{k}_\s$ for which
%$\F_\s^k=\F_\s^{k+1} =: \overline{\F}_\s$ for all $k \ge \bar{k}_\s$. If, furthermore, the condition
%\begin{equation}\label{ExistsZ}
%\braces{\z \in \reals^{n_x} : \z \in \bigcap_{\s \in \SC} \Proj_X \left( \overline{\F}_\s \right) }\neq \emptyset,
%\end{equation}
%is satisfied,
\end{proposition}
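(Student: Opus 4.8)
The plan is to reduce the statement to the exact progressive-hedging convergence already recorded in Proposition~\ref{PropPH}: I would show that after finitely many outer iterations the inner approximations stop changing, so that the tail of an FW-PH run behaves like an exact PH run, and then transfer the resulting convergence of the dual iterates to the reported bound $\phi^k=\phi(\widetilde\omega^k)$. Throughout, Proposition~\ref{PropFreeLB} already gives $\phi^k=\phi(\widetilde\omega^k)\le\optval^{LD}$ with $\sum_\s p_\s\omega_\s^k=0$ for every $k$, so it suffices to prove $\phi(\widetilde\omega^k)\to\optval^{LD}$.

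First I would establish stabilization of the vertex sets. By Line~\ref{IAUpdate} the sets $\V_\s^k$ are nondecreasing in $k$, and every point they ever contain lies in the finite set $\V_\s^0\cup\mathcal V(\conv(\K_\s))$, since Line~\ref{DirFindingSDM} returns an extreme point of $\conv(\K_\s)$ and there are only finitely many of these. A nondecreasing sequence of subsets of a finite set is eventually constant, so there exist $\bar k$ and sets $\V_\s^\infty$ with $\V_\s^k=\V_\s^\infty$ for all $k\ge\bar k$ and all $\s\in\SC$. For $k\ge\bar k$ the inner QP at Line~\ref{SDM-XYUpdate} solves the augmented-Lagrangian subproblem \emph{exactly} over the fixed compact polytope $\conv(\V_\s^\infty)$, so the tail of the run is exact PH applied to a convexified problem with constraint sets $\conv(\V_\s^\infty)$.

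Next I would invoke Proposition~\ref{PropPH} on this tail; since PH convergence is insensitive to the starting point, treating the state at iteration $\bar k$ as an initialization is legitimate, provided the relevant problem is feasible and its feasible sets are compact (which holds, as $\V_\s^\infty$ is finite). When $t_{max}\ge 2$, Lemma~\ref{FiniteSDMTermination} provides the crucial upgrade: because the hull has ceased to grow, a further inner iteration cannot strictly enlarge it, which by the contrapositive of the lemma forces the returned iterate to be optimal over the \emph{full} $\conv(\K_\s)$; hence the tail is exact PH on~\eqref{EqSMIPEFSConv} itself, feasible by hypothesis. When $t_{max}=1$ no such upgrade is available, and I would instead use~\eqref{FeasCond}: since $\V_\s^0\subseteq\V_\s^\infty$ and projection preserves inclusions, $\bigcap_\s\Proj_x(\conv(\V_\s^\infty))\neq\emptyset$, so the restricted problem is feasible and Proposition~\ref{PropPH} applies to it. Either way $\omega^k\to\omega^*$ and $x_\s^k-z^k\to0$, and $\phi(\omega^k)$ converges to the optimal value of the corresponding Lagrangian dual.

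Finally I would transfer this to $\phi^k$ and confront the main obstacle. Because $\widetilde\omega_\s^k=\omega_\s^k+\alpha\rho(x_\s^{k-1}-z^{k-1})$ and $x_\s^{k-1}-z^{k-1}\to0$, we get $\widetilde\omega^k\to\omega^*$; since each $\phi_\s$ is a finite minimum of affine functions it is concave and continuous on all of $\reals^{n_x}$, whence $\phi^k=\phi(\widetilde\omega^k)\to\phi(\omega^*)$. The delicate point is to certify $\phi(\omega^*)=\optval^{LD}$ rather than merely the value of a restricted dual. For $t_{max}\ge2$ this is immediate from the full-problem reduction. For $t_{max}=1$ I would argue that $\omega^*$ is optimal for the full dual: the direction-finding step (Line~\ref{DirFindingSDM}) always minimizes the single-scenario linear Lagrangian over $\K_\s$, so were $\phi_\s(\omega_\s^*)$ strictly smaller than its minimum over $\conv(\V_\s^\infty)$, the limiting FW vertex would lie outside $\conv(\V_\s^\infty)$ and enlarge the hull, contradicting stabilization; hence $\phi_\s(\omega_\s^*)$ is attained inside $\conv(\V_\s^\infty)$ and $\phi(\omega^*)$ equals both the restricted and the full optimal dual value. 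The subtlety that makes this step nontrivial is that the direction-finding gradient is taken at the warm-start $\widetilde x_\s$ rather than at the iterate $x_\s^k$; the gap between them is controlled by $x_\s^{k-1}-z^{k-1}\to0$, and verifying that this discrepancy vanishes fast enough for the limiting vertex-generation argument to bite is where I expect the real work to lie.
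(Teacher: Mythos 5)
Your proposal is correct and takes essentially the same route as the paper's own proof: finite stabilization of the vertex sets, the contrapositive of Lemma~\ref{FiniteSDMTermination} to reduce the $t_{max}\ge 2$ case to exact PH on~\eqref{EqSMIPEFSConv}, and for $t_{max}=1$ the use of~\eqref{FeasCond} to apply Proposition~\ref{PropPH} to the stabilized restricted problem followed by the vertex-generation/stabilization argument to lift dual optimality of $\omega^*$ from the restricted sets $\conv(\V_\s^\infty)=\overline{\F}_\s$ to the full sets $\conv(\K_\s)$. The only caveat you raise---whether the discrepancy between the warm-start gradient $c+\widetilde{\omega}_\s^k$ and $c+\omega_\s^*$ vanishes ``fast enough''---is not actually an obstacle: no rate is needed, since the continuity of the minimum-value function $\omega \mapsto \min\braces{(c+\omega)^\top x + q_\s^\top y \st (x,y)\in \overline{\F}_\s}$ (and its analogue over the compact set $\conv(\K_\s)$), together with $\widetilde{\omega}_\s^k \to \omega_\s^*$, is precisely how the paper closes this step.
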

\begin{proof}
First note that for any  $t_{max} \ge 1$, the sequence of inner approximations $\conv(\V_\s^k) $, $\s \in \SC$, will stabilize, in that, for some threshold $0 \le \bar{k}_\s$, we have for all $k \ge \bar{k}_\s$
\begin{equation}\label{Stabilisation}
\conv(\V_\s^k) =: \overline{\F}_\s \subseteq \conv(\K_\s).
\end{equation}
This follows due to the assumption that each expansion of the inner approximations $\conv(\V_\s^k) $ take the form $\V_\s^{k} \gets \V_\s^{k-1} \cup \braces{(\widehat{x}_\s,\widehat{y}_\s)}$, where $(\widehat{x}_\s,\widehat{y}_\s)$ is a vertex of $\conv(\K_\s)$. Since each polyhedron $\conv(\K_\s)$, $\s \in \SC$ has only a finite number of such vertices,
the stabilization~\eqref{Stabilisation} must occur at some $\bar{k}_\s < \infty$.
%Of interest, regarding this stabilization, is the condition
%\begin{equation}\label{ExistsZ}
%\braces{\z \in \reals^{n_x} : \z \in \bigcap_{\s \in \SC} \Proj_X \left( \overline{\F}_\s \right) }\neq \emptyset,
%\end{equation}
%whose satisfaction indicates that there exists an $x \in X$ and $y_\s \in Y_\s$, $\s \in \SC$, for which $(x,y_\s) \in \overline{\F}_\s$ for all $\s \in \SC$.

For $t_{\max} \ge 2$, the stabilizations~\eqref{Stabilisation}, $\s \in \SC$, are reached at some iteration $\bar{k}:=\max_{\s \in \SC} \braces{\bar{k}_\s}$.
%Method $\Phi$ enforces the optimality~\eqref{TildeOpt0}.
%\begin{equation}\label{TildeOpt0}
%(\widetilde{x}_\s^k,\widetilde{y}_\s^k) \in \argmin_{x,y} \braces{ \begin{array}{c} L_\s^{\rho}(x,y,\z^{k-1},\omega_\s^{k}) : \\ (x,y) \in \widetilde{F}_\s^k \end{array}}.
%\end{equation}
Noting that $\overline{\F}_\s =\conv(\V_\s^k)$ for $k > \overline{k}$ we must have
\begin{equation}\label{TildeOpt}
(x_\s^k,y_\s^k) \in \argmin_{x,y} \braces{ L_\s^{\rho}(x,y,\z^{k-1},\omega_\s^{k}) : \\ (x,y) \in \conv(\K_\s)}.
\end{equation}
Otherwise, due to Lemma~\ref{FiniteSDMTermination}, the call to SDM on Line~\ref{SDMCall} must
return $\V_\s^k \supset \V_\s^{k-1}$, contradicting  the finite stabilization~\eqref{Stabilisation}. Therefore, the $k \ge \bar{k}$ iterations of Algorithm~\ref{AlgPHFW} are identical to Algorithm~\ref{AlgPH} iterations, and so Proposition~\ref{PropPH} implies that
$\lim_{k \to \infty} x_\s^k - \z^k = 0$, $\s \in \SC$, and $\lim_{k \to \infty} \phi(\omega^k) = \optval^{LD}$.
By the continuity of $\omega \mapsto \phi_\s(\omega)$ for each $\s \in \SC$, we have
$\lim_{k \to \infty} \phi^k = \lim_{k \to \infty} \sum_{\s \in \SC} p_\s \phi_\s(\omega_\s^k+\alpha(x_\s^{k-1}-z^{k-1})) = \lim_{k \to \infty} \sum_{\s \in \SC} p_\s \phi_\s(\omega_\s^k) =\lim_{k \to \infty} \phi(\omega^k) = \optval^{LD}$ for all $\alpha \in \reals$.

In the case $t_{max}=1$, we have at each iteration $k \ge 1$ the optimality
$$
(x_\s^k,y_\s^k) \in \argmin_{x,y} \braces{ L_\s^\rho(x_\s,y_\s,\z^{k-1},\omega_\s^k)  \st   (x_\s,y_\s) \in \conv(\V_\s^k) }.
$$
By the stabilization~\eqref{Stabilisation}, the iterations $k \ge \bar{k}$ of Algorithm~\ref{AlgPHFW} are identical to PH iterations applied to the restricted problem
\begin{equation}\label{EqSMIPEFSConvRestr}
\min_{x,y,\z} \braces{\sum_{\s \in \SC} p_\s(c^\top
  x_\s + q_\s^\top y_\s) \st (x_\s,y_\s) \in \overline{\F}_\s,
  \ \forall \s \in \SC, x_\s = \z, \ \forall \s \in \SC}.
\end{equation}
We have initialized the sets $(\V_\s^0)_{\s \in \SC}$ such that $\cap_{\s \in \SC} \Proj_x \conv(\V_\s^0)\neq \emptyset$, so since the inner approximations to $\conv(K_s)$
only expand in the algorithm,
$\cap_{\s \in \SC} \Proj_x (\overline{\F}_s) \neq \emptyset$.
Therefore, problem~\eqref{EqSMIPEFSConvRestr} is a feasible and bounded linear program, and so
%of relatively complete recourse on SMIP~\eqref{EqSMIPEFS}, problem~\eqref{EqSMIPEFSConvRestr} is a feasible linear program, and so
the PH convergence described in Proposition~\ref{PropPH} with $D_\s = \overline{\F}_\s$, $\s \in \SC$, holds for its application to problem~\eqref{EqSMIPEFSConvRestr}.
That is, for each $\s \in \SC$, we have 1) $\lim_{k \to \infty} \omega_\s^k = \omega_\s^*$ and $\lim_{k \to \infty}
(x_s^k - \z^k) = 0$; and 2) for all limit points $((x_\s^*,y_\s^*)_{\s \in \SC}, \z^*)$, we have the feasibility and
optimality of the limit points, which implies $x_\s^* =
\z^*$ and
 %the satisfaction of the following necessary and sufficient
 %conditions for optimality of $((x_s^*,y_s^*)_{\s \in \SC}, \z^*)$
%for problem~\eqref{EqSMIPEFSConvRestr}
\exclude{
\begin{align}
%\left[ \begin{array}{c} c+\omega_\s^*  \\ q_\s \end{array}\right]^\top \left[ \begin{array}{c} x-x^* \\ y-y^* \end{array} \right] &\ge 0 \quad \forall \;(x,y) \in \overline{\F}_\s \label{Stationarity}\\
0=\min_{x,y} \braces{\left[ \begin{array}{c} c+\omega_\s^*  \\ q_\s \end{array}\right]^\top \left[ \begin{array}{c} x-x^* \\ y-y^* \end{array} \right] : (x,y) \in \overline{\F}_\s}. \label{MinEqZero}
\end{align}
}
\begin{equation}
  \min_{x,y} \braces{ (c+\omega_\s^*)^\top(x-x^*) + q_\s^\top
    (y-y^*) \st (x,y) \in  \overline{\F}_\s} = 0
  \label{MinEqZero}
\end{equation}
%The stationarity~\eqref{Stationarity} for linear programs is the same as the definition of optimality, and so for each $\s \in \SC$ we have also
%\begin{equation}\label{MinEqZero}
%\min_{x,y} \braces{\left[ \begin{array}{c} c+\omega_\s^*  \\ q_\s \end{array}\right]^\top \left[ \begin{array}{c} x-x^* \\ y-y^* \end{array} \right] : (x,y) \in \overline{\F}_\s} = 0.
%\end{equation}
%Due to~\eqref{MinEqZero} holding at all limit points $(x_\s^*,y_\s^*)$ for all scenarios $\s \in \SC$,
Next, for each $\s \in \SC$ the compactness of $\conv(\K_\s) \supseteq
\overline{\F}_\s$, the continuity of the minimum value function
\exclude{
\[
\omega \mapsto \min_{x,y} \braces{\left[ \begin{array}{c} c+\omega  \\ q_\s \end{array}\right]^\top \left[ \begin{array}{c} x \\ y \end{array} \right] : (x,y) \in \overline{\F}_\s},
\]
}
\[ \omega \mapsto \min_{x,y} \braces{ (c+\omega)^\top x + q_\s^\top y
  \st (x,y) \in \overline{\F}_\s  }, \]
and the limit $\lim_{k \to \infty} \widetilde{\omega}_\s^{k+1} = \lim_{k \to \infty} {\omega}_\s^{k+1}+\alpha\rho(x_\s^k
- \z^k) = \omega_\s^*$, together imply that
\exclude{
\begin{equation}\label{LimZero}
\lim_{k \to \infty} \min_{x,y} \braces{\left[ \begin{array}{c} c+\widetilde{\omega}_\s^{k+1}  \\ q_\s \end{array}\right]^\top \left[ \begin{array}{c} x-x^k \\ y-y^k \end{array} \right] : (x,y) \in \overline{\F}_\s} = 0.
\end{equation}
}
\begin{equation}\label{LimZero}
  \lim_{k \to \infty} \min_{x,y} \braces{
    (c+\widetilde{\omega}_\s^{k+1})^\top (x-x^k) +  q_\s^\top (y-y^k)
    \st (x,y) \in \overline{\F}_\s} = 0.
\end{equation}

Recall that $\widetilde{\omega}_s^k = \omega_s^k + \rho\alpha(x_s^{k-1} - z^{k-1})$ is the $t=1$ value of
$\widehat{\omega}_s^t$ defined in Line \ref{SDMomega} of Algorithm \ref{AlgSDM}.
Thus, for $k+1 > \bar{k}$, we have due to the
stabilization~\eqref{Stabilisation} that
\exclude{
\begin{align}
&\min_{x,y} \braces{\left[ \begin{array}{c} c+\widetilde{\omega}_\s^{k+1}  \\ q_\s \end{array}\right]^\top \left[ \begin{array}{c} x-x^k \\ y-y^k \end{array} \right] : (x,y) \in \overline{\F}_\s} \nonumber\\
&= \min_{x,y} \braces{\left[ \begin{array}{c} c+\widetilde{\omega}_\s^{k+1}  \\ q_\s \end{array}\right]^\top \left[ \begin{array}{c} x-x^k \\ y-y^k \end{array} \right] \st (x,y) \in \conv(\K_\s)} \label{Equality}
\end{align}
}
\begin{multline}
   \min_{x,y} \braces{(c+\widetilde{\omega}_\s^{k+1})^\top (x-x^k) +  q_\s^\top (y-y^k)
     \st (x,y) \in \overline{\F}_\s} = \\
   \min_{x,y} \braces{(c+\widetilde{\omega}_\s^{k+1})^\top (x-x^k) +  q_\s^\top (y-y^k)
  \st (x,y) \in \conv(\K_\s)}
  \label{Equality}
\end{multline}
If equality~\eqref{Equality} does not hold, then the inner approximation expansion \linebreak $\overline{\F}_\s \subset \conv(\V_\s^{k+1})$ must occur, % when,% line \ref{IAUpdate} of%\ref{DirFindingSDM} of Algorithm \ref{AlgSDM} is executed at iteration $k+1$ of Algorithm~\ref{AlgPHFW},
since a point $(\widehat{x}_\s,\widehat{y}_\s) \in \conv(\K_\s)$ that can be strictly separated from $\overline{\F}_\s$ would have been discovered during the iteration $k+1$ execution of Algorithm \ref{AlgSDM}, Line~\ref{DirFindingSDM}, $t=1$.  The expansion $\overline{\F}_\s \subset \conv(\V_\s^{k+1})$ contradicts the finite stabilization~\eqref{Stabilisation}, and so~\eqref{Equality} holds.
Therefore, the
equalities~\eqref{LimZero} and~\eqref{Equality} imply that
\exclude{
\begin{equation}
\lim_{k \to \infty} \min_{x,y} \braces{\left[ \begin{array}{c}
      c+\widetilde{\omega}_\s^{k+1}  \\ q_\s \end{array}\right]^\top
  \left[ \begin{array}{c} x-x^k \\ y-y^k \end{array} \right] \st (x,y) \in \conv(\K_\s)} = 0.
\end{equation}
}
\begin{equation}
  \lim_{k \to \infty} \min_{x,y} \braces{
    (c+\widetilde{\omega}_\s^{k+1})^\top(x-x^k) + q_\s^\top (y-y^k)
    \st  (x,y) \in \conv(\K_\s)} = 0.
\end{equation}

Our argument has shown that for all limit points $(x_\s^*,y_\s^*)$,
$\s \in \SC$, the following stationarity condition is satsfied:
\exclude{
\begin{equation}
\left[ \begin{array}{c} c+\omega_\s^*  \\ q_\s \end{array}\right]^\top \left[ \begin{array}{c} x-x_\s^* \\ y-y_\s^* \end{array} \right] \ge 0 \;\; \forall \;(x,y) \in \conv(\K_\s), \label{StationarityK}
\end{equation}
}
\begin{equation}
  (c+\omega_\s^*)^\top(x-x_\s^*) + q_\s^\top(y-y_\s^*) \geq 0 \quad
  \forall (x,y) \in \conv(\K_\s), \label{StationarityK}
\end{equation}
which together with the feasibility $x_\s^* = \z^*$, $\s \in \SC$ implies that each limit point $((x_\s^*,y_\s^*)_{\s \in \SC}, \z^*)$ is optimal for problem~\eqref{EqSMIPEFSConv} and $\omega^*$ %, being a Lagrange multiplier for problem~\eqref{EqSMIPEFSConv},
is optimal for the dual problem~\eqref{eq:zldprob}.

Thus, for all $t_{max} \ge 1$, we have shown $\lim_{k \to \infty} (x_\s^k - \z^k) = 0$, $\s \in \SC$, and $\lim_{k \to \infty} \phi(\omega^k) = \optval^{LD}$.
By similar reasoning used in the $t_{max} \ge 2$ case, it is straightforward that for all $\alpha \in \reals$, we also have $\lim_{k \to \infty} \phi^k = \optval^{LD}$.
\end{proof}

While using a large value of $t_{max}$ more closely matches Algorithm~\ref{AlgPHFW} to the original PH algorithm as
described in Algorithm \ref{AlgPH},
we are motivated to use a small value of $t_{max}$ since the work per iteration is proportional to $t_{max}$.
Specifically, each iteration requires solving $t_{max}|\SC|$ MILP subproblems, and $t_{max}|\SC|$ continuous \red{convex} quadratic subproblems.
(For reference, Algorithm~\ref{AlgPH} applied to problem~\eqref{EqSMIPEFS} requires the solution of $|\SC|$
MIQP subproblems for each $\omega$ update and $|\SC|$ MILP subproblems for each Lagrangian bound $\phi$ computation.)
%Thus,  the use of $t_{max}=0$  is preferred during the early iterations, where little improvement in the $\omega$ update is realized for $t_{max} \ge 1$; only in the tail-end may it be desirable to have $t_{max} \ge 1$ as a safe-guard. In letting $t_{max}=t_{max}^k$ vary with each iteration $k$, it is straightforward to establish the stronger $t_{max} \ge 1$ theoretical support of Proposition~\ref{PropFWPH-SMIP} if we only assume $t_{max}^k=0$ for finitely many $k \ge 1$.

\exclude{
  % Jeff doesn't think this is necessary
It may be possible to establish convergence of Algorithm \ref{AlgPHFW} for $t_{max}=1$ without requiring the assumption
of relatively complete recourse and the initialization in Lines \ref{PHFWt0Begin} - \ref{PHFWt0End}.
Establishing this may require a deeper understanding of the
convergence analysis of PH (or ADMM) applied to a problem of the form
~\eqref{EqSMIPEFSConvRestr}. Such analysis as developed
in~\cite{RaghunathanEtAl2014}, for example, might be extended for this
purpose.
}

\section{Numerical Experiments} \label{Sec4}

We performed computations using a C++ implementation of Algorithm~\ref{AlgPH} ($\F_\s = \K_\s$, $\s \in \SC$) and \red{Algorithm}~\ref{AlgPHFW} using CPLEX 12.5~\cite{CPLEX12-5} as the solver \blue{for all subproblems}. For reading SMPS files into scenario-specific subproblems and for their interface with CPLEX, we used modified versions of the COIN-OR~\cite{COIN-ORURL} Smi and Osi libraries.
The computing environment is the Raijin cluster maintained by Australia's National Computing Infrastructure (NCI) and supported by the Australian Government~\cite{NCIURL}.
The Raijin cluster is a high performance computing (HPC) environment which has 3592 nodes (system units), 57472 cores of Intel Xeon E5-2670 processors with up to 8 GB PC1600 memory per core (128 GB per node). All experiments were performed in a serial setting using a single node and one thread per CPLEX solve.

In the experiments with Algorithms~\ref{AlgPH} and~\ref{AlgPHFW}, we set the convergence tolerance at \green{$\epsilon = 10^{-3}$}. For Algorithm~\ref{AlgPHFW}, we set $t_{max}=1$. \red{Also, for all experiments performed, we set $\omega^0 = 0$.}
In this case, convergence of our algorithm requires that \eqref{FeasCond} holds, which can be guaranteed during the initialization of the inner approximations $(\V^0_\s)_{\s \in \SC}$.
Under the standard assumption of {\it relatively complete resource}, i.e., the assumption that for all $x \in X$ and $s \in \SC$ there exists $y_\s$ such that $(x,y_\s) \in K_\s$, a straightforward mechanism for ensuring this assumption is to solve the recourse problems for any fixed $\widehat{x} \in X$.  Specifically, for each $\s \in \SC$, let \[ \widehat{y}_s \in \arg \min_y \{q_s^\top y \st (\widehat{x},y) \in K_s\}, \]
and initialize $\V_\s^0$ for each $\s \in \SC$ so that  $ \{(\widehat{x}, \widehat{y}_\s)\} \in \V_\s^0$.
Observe also that this initialization corresponds to a technique for computing a
feasible solution to the original problem~\eqref{EqSMIPEF}, which is independently useful for obtaining an {\it upper bound} on
$\optval^{SMIP}$.

For the computational experiments, we run the following initialization to obtain $(\V_\s^0)_{\s \in \SC}$ and $(x_\s^0,y_\s^0)_{\s \in \SC}$
that are input into Algorithm~\ref{AlgPHFW}:

\begin{algorithm}
\caption{Initialization step for FW-PH}
\begin{algorithmic}[1]
\State Precondition: Problem~\eqref{EqSMIPEF} has relatively complete recourse
\Function{FW-PH-Initialization}{$\omega^0$}
\For{$\s \in \SC$} \label{FWPH0InitBegin}
    \State $({x}_\s^0,{y}_s^0) \gets \argmin_{x,y}
    \braces{(c+\omega_\s^0)^\top x + q_\s^\top y \st (x,y) \in \red{\K_\s}}$
    \State $\V_\s^0 \gets \braces{({x}_\s^0,{y}_s^0)}$
    \If{$\s \neq 1$}
    	\State $\overline{y}_s \gets \argmin_{y}
    \braces{ q_\s^\top y \st (x_1^0,y) \in \red{\K_\s}}$
    \State $\V_\s^0 \gets \V_\s^0 \cup \braces{({x}_1^0,\overline{y}_s)}$
    \EndIf
  \EndFor  \label{FWPH0InitEnd}
  \State {\bf return} $\left(\V_\s^0, (x_\s^0,y_\s^0)\right)_{\s \in \SC}$
  \EndFunction
\end{algorithmic}
\end{algorithm}
If problem~\eqref{EqSMIPEF} does not have relatively complete recourse, then any means to compute a feasible solution to~\eqref{EqSMIPEF} may be employed to
initialize each $\V_\s^0$, $\s \in \SC$, \blue{in a way to satisfy \eqref{FeasCond}}.

%However, for the results presented here, we simply initialize $\V^0_\s$ to an element of $\argmin_{x,y}
%\braces{(c+\omega_\s^0)^\top x + q_\s^\top y \st (x,y) \in \K_\s}$, solving a single-scenario MILP for each $\s\in\SC$.

Two sets of Algorithm~\ref{AlgPHFW} experiments correspond to variants considering $\alpha=0$ and $\alpha=1$. Computations were performed on four problems: the CAP instance 101 with the first 250 scenarios (CAP-101-250)~\cite{Bodur2014EtAl}, the DCAP instance DCAP233\_500 with 500 scenarios, the SSLP instances SSLP5.25.50 with 50 scenarios (SSLP-5-25-50) and  SSLP10.50.100 with 100 scenarios (SSLP-10-50-100). The latter three problems are described in detail in~\cite{NtaimoPhD2004,GTURL} and accessible at~\cite{GTURL}. For each problem, computations were performed for different penalty values $\rho>0$. The penalty values used in the experiments for the SSLP-5-25-50 instance were chosen to include those penalties that are tested in a computational experiment with PH whose results are depicted in Figure 2 of~\cite{GadeEtAl2016}. For the other problem instances, the set of penalty values $\rho$ tested is chosen to capture a reasonably wide range of performance potential for both PH and FW-PH.
All computational experiments were allowed to run for a maximum of two hours in wall \red{clock} time.

%with the exception of the DCAP experiments were allowed to run for a maximum of two hours based on CPU time; the DCAP instance was allowed to run for six hours.
\addtolength{\tabcolsep}{-2pt}
\begin{table}[h]
  \centering
%    \begin{tiny}
    \begin{tabular}{rccccccccc}
    \toprule
          & \multicolumn{3}{c}{Percentage gap} & \multicolumn{3}{c}{\# Iterations} & \multicolumn{3}{c}{Termination} \\
    \midrule
    \multirow{2}[2]{*}{Penalty} & \multicolumn{1}{c}{\multirow{2}[2]{*}{PH}} & \multicolumn{2}{c}{FW-PH} & \multicolumn{1}{c}{\multirow{2}[2]{*}{PH}} & \multicolumn{2}{c}{FW-PH} & \multirow{2}[2]{*}{PH} & \multicolumn{2}{c}{FW-PH} \\
          & \multicolumn{1}{c}{} & $\alpha=0$ & $\alpha=1$ & \multicolumn{1}{c}{} & $\alpha=0$ & $\alpha=1$ &       & $\alpha=0$ & $\alpha=1$  \\ \toprule
    20    & 0.08\% & 0.10\% & 0.11\% & 466   & 439   & 430   & T     & T     & T \\
    100   & 0.01\% & 0.00\% & 0.00\% & 178   & 406   & 437   & C     & T     & T \\
    500   & 0.07\% & 0.00\% & 0.00\% & 468   & 92    & 93    & T     & C     & C \\
    1000  & 0.15\% & 0.00\% & 0.00\% & 516   & 127   & 130   & T     & C     & C \\
    2500  & 0.34\% & 0.00\% & 0.00\% & 469   & 259   & 274   & T     & C     & C \\
    5000  & 0.66\% & 0.00\% & 0.00\% & 33    & 431   & 464   & C     & T     & T \\
    7500  & 0.99\% & 0.00\% & 0.00\% & 28    & 18    & 19    & C     & C     & C \\
    15000 & 1.59\% & 0.00\% & 0.00\% & 567   & 28    & 33    & T     & C     & C \\
    \bottomrule
    \end{tabular}%
%    \end{tiny}
  \caption{Result summary for CAP-101-250, with the absolute percentage gap based on the known optimal value 733827.3\label{tab:CAPResultsSum}}
\end{table}%

\begin{table}[h]
  \centering
%  \begin{tiny}
    \begin{tabular}{rccccccccc}
    \toprule
          & \multicolumn{3}{c}{Percentage gap} & \multicolumn{3}{c}{\# Iterations} & \multicolumn{3}{c}{Termination} \\
    \midrule
    \multirow{2}[2]{*}{Penalty} & \multicolumn{1}{c}{\multirow{2}[2]{*}{PH}} & \multicolumn{2}{c}{FW-PH} & \multicolumn{1}{c}{\multirow{2}[2]{*}{PH}} & \multicolumn{2}{c}{FW-PH} & \multirow{2}[2]{*}{PH} & \multicolumn{2}{c}{FW-PH} \\
          & \multicolumn{1}{c}{} & $\alpha=0$ & $\alpha=1$ & \multicolumn{1}{c}{} & $\alpha=0$ & $\alpha=1$ &       & $\alpha=0$ & $\alpha=1$  \\ \toprule
    2     & 0.13\% & 0.12\% & 0.12\% & 1717  & 574   & 600   & T     & T     & T \\
    5     & 0.22\% & 0.09\% & 0.09\% & 2074  & 589   & 574   & T     & T     & T \\
    10    & 0.23\% & 0.07\% & 0.07\% & 2598  & 592   & 587   & T     & T     & T \\
    20    & 0.35\% & 0.07\% & 0.07\% & 1942  & 590   & 599   & T     & T     & T \\
    50    & 1.25\% & 0.06\% & 0.06\% & 2718  & 597   & 533   & T     & T     & T \\
    100   & 1.29\% & 0.06\% & 0.06\% & 2772  & 428   & 438   & T     & C     & C \\
    200   & 2.58\% & 0.06\% & 0.06\% & 2695  & 256   & 262   & T     & C     & C \\
    500   & 2.58\% & 0.07\% & 0.07\% & 2871  & 244   & 246   & T     & C     & C \\
    \bottomrule
    \end{tabular}%
%    \end{tiny}
  \caption{Result summary for DCAP-233-500, with the absolute percentage gap based on the best known lower bound 1737.7.  \label{tab:DCAPResultsSum}}
\end{table}%

\begin{table}[h]
  \centering
%  \begin{tiny}
    \begin{tabular}{rccccccccc}
    \toprule
          & \multicolumn{3}{c}{Percentage gap} & \multicolumn{3}{c}{\# Iterations} & \multicolumn{3}{c}{Termination} \\
    \midrule
    \multirow{2}[2]{*}{Penalty} & \multicolumn{1}{c}{\multirow{2}[2]{*}{PH}} & \multicolumn{2}{c}{FW-PH} & \multicolumn{1}{c}{\multirow{2}[2]{*}{PH}} & \multicolumn{2}{c}{FW-PH} & \multirow{2}[2]{*}{PH} & \multicolumn{2}{c}{FW-PH} \\
          & \multicolumn{1}{c}{} & $\alpha=0$ & $\alpha=1$ & \multicolumn{1}{c}{} & $\alpha=0$ & $\alpha=1$ &       & $\alpha=0$ & $\alpha=1$  \\ \toprule
    1     & 0.30\% & 0.00\% & 0.00\% & 105   & 115   & 116   & C     & C     & C \\
    2     & 0.73\% & 0.00\% & 0.00\% & 51    & 56    & 56    & C     & C     & C \\
    5     & 0.91\% & 0.00\% & 0.00\% & 25    & 26    & 27    & C     & C     & C \\
    15    & 3.15\% & 0.00\% & 0.00\% & 12    & 16    & 17    & C     & C     & C \\
    30    & 6.45\% & 0.00\% & 0.00\% & 12    & 18    & 18    & C     & C     & C \\
    50    & 9.48\% & 0.00\% & 0.00\% & 18    & 25    & 26    & C     & C     & C \\
    100   & 9.48\% & 0.00\% & 0.00\% & 8     & 45    & 45    & C     & C     & C \\
    \bottomrule
    \end{tabular}%
%    \end{tiny}
  \caption{Result summary for SSLP-5-25-50, with the absolute percentage gap based on the known optimal value -121.6 \label{tab:SSLPResultsSum5-25-50}}
\end{table}%

\begin{table}[h]
  \centering
%  \begin{tiny}
    \begin{tabular}{rccccccccc}
    \toprule
          & \multicolumn{3}{c}{Percentage gap} & \multicolumn{3}{c}{\# Iterations} & \multicolumn{3}{c}{Termination} \\
    \midrule
    \multirow{2}[2]{*}{Penalty} & \multicolumn{1}{c}{\multirow{2}[2]{*}{PH}} & \multicolumn{2}{c}{FW-PH} & \multicolumn{1}{c}{\multirow{2}[2]{*}{PH}} & \multicolumn{2}{c}{FW-PH} & \multirow{2}[2]{*}{PH} & \multicolumn{2}{c}{FW-PH} \\
          & \multicolumn{1}{c}{} & $\alpha=0$ & $\alpha=1$ & \multicolumn{1}{c}{} & $\alpha=0$ & $\alpha=1$ &       & $\alpha=0$ & $\alpha=1$  \\ \toprule
    1     & 0.57\% & 0.22\% & 0.23\% & 126   & 234   & 233   & T     & T     & T \\
    2     & 0.63\% & 0.03\% & 0.03\% & 127   & 226   & 228   & T     & T     & T \\
    5     & 1.00\% & 0.00\% & 0.00\% & 104   & 219   & 220   & C     & T     & T \\
    15    & 2.92\% & 0.00\% & 0.00\% & 33    & 45    & 118   & C     & C     & C \\
    30    & 4.63\% & 0.00\% & 0.00\% & 18    & 21    & 22    & C     & C     & C \\
    50    & 4.63\% & 0.00\% & 0.00\% & 11    & 26    & 27    & C     & C     & C \\
    100   & 4.63\% & 0.00\% & 0.00\% & 9     & 43    & 45    & C     & C     & C \\
    \bottomrule
    \end{tabular}%
%    \end{tiny}
  \caption{Result summary for SSLP-10-50-100, with the absolute percentage gap based on the known optimal value -354.2 \label{tab:SSLPResultsSum}}
\end{table}%

Tables~\ref{tab:CAPResultsSum}--\ref{tab:SSLPResultsSum} provide a summary indicating the quality of the Lagrangian bounds $\phi$ computed at the end of each experiment for the four problems with varying penalty parameter $\rho$. In each of these tables, the first column lists the values of the penalty parameter $\rho$, while the following are presented for PH and FW-PH (for both $\alpha=0$ and $\alpha=1$) computations in the remaining columns: 1) the absolute percentage gap \blue{$\left|\frac{\optval^*-\phi}{\optval^*}\right|*100\%$} between the computed Lagrangian bound $\phi$ and some reference value $\optval^*$ that is either a known optimal value for the problem, or a known best \blue{upper} bound thereof (column ``Percentage Gap''); 2) the total number of dual updates (``\# Iterations''); and 3) the indication of whether the algorithm terminated due to the time limit, indicated by letter ``T'', or the satisfaction of the convergence criterion $\sqrt{\sum_{\s \in \SC} p_\s\norm{x_\s^k-\z^{k-1}}_2^2} < \epsilon $, indicated by letter ``C'' (column ``Termination'').

%Some interesting
The following observations can be made from the results presented in Tables~\ref{tab:CAPResultsSum}--\ref{tab:SSLPResultsSum}. First, for small values of the penalty $\rho$, there is no clear preference between the bounds $\phi$ generated by PH and FW-PH. However, for higher penalties, the bounds $\phi$ obtained by FW-PH are consistently of better quality (i.e., higher) than those obtained by PH, regardless of the variant used (i.e. $\alpha=0$ or $\alpha=1$).
This tendency is typically illustrated, for example, in Table 2, where the absolute percentage gap of the Lagrangian lower bound with the known optimal value was 0.06\% with $\rho=200$ for FW-PH ($\alpha = 0$), while it was 2.58\% for the same value of $\rho$ for PH. This improvement is consistently observed for the other problems and the other values of $\rho$ that are not too close to zero.
Also, FW-PH did not terminate with suboptimal convergence or display cycling behavior for any of the penalty values $\rho$ in any of the problems considered. For example, all experiments considered in Table~\ref{tab:SSLPResultsSum5-25-50} terminated due to convergence. The percentage gaps suggest that the convergence for PH was suboptimal, while it was optimal for FW-PH.  Moreover, it is possible to see from these tables that the quality of the bounds $\phi$ obtained using FW-PH were not as sensitive to the value of the penalty parameter $\rho$ as obtained using PH.

The FW-PH with $\alpha=0$ versus PH convergence profiles for the experiments performed are given in Figures~\ref{FigCAP101}--\ref{FigSSLP10-50-100}, in which we provide plots of wall time versus Lagrangian bound values based on profiling of varying penalty. The times scales for the plots have been set such that trends are meaningfully depicted (1000s for CAP-101-250 and DCAP-233-500, 100 seconds for SSLP-5-25-50, and 3000s for SSLP-10-50-100). The trend of the Lagrangian bounds is depicted with solid lines for FW-PH with $\alpha = 0$ and with dashed lines for PH. Plots providing the same comparison for FW-PH with $\alpha = 1$ are provided in Appendix~\ref{ApxA}.

As seen in the plots of Figures~\ref{FigCAP101}--\ref{FigSSLP10-50-100}, the Lagrangian bounds $\phi$ generated with PH tend to converge suboptimally, often displaying cycling, for large penalty values. In terms of the quality of the bounds obtained, while there is no clear winner when low penalty $\rho$ values are used, for large penalties, the quality of the bounds $\phi$ generated with FW-PH is consistently better than for the bounds generated with PH, regardless of the $\alpha$ value. {This last observation is significant because the effective use of large penalty values $\rho$ in methods based on augmented Lagrangian relaxation tends to yield the most rapid early iteration improvement in the Lagrangian bound; this point is most clearly illustrated in the plot of Figure~\ref{FigSSLP5-25-50}.}

\begin{figure}[hbtp]
\includegraphics[trim = 30mm 10mm 25mm 10mm, clip, width=0.9\textwidth]{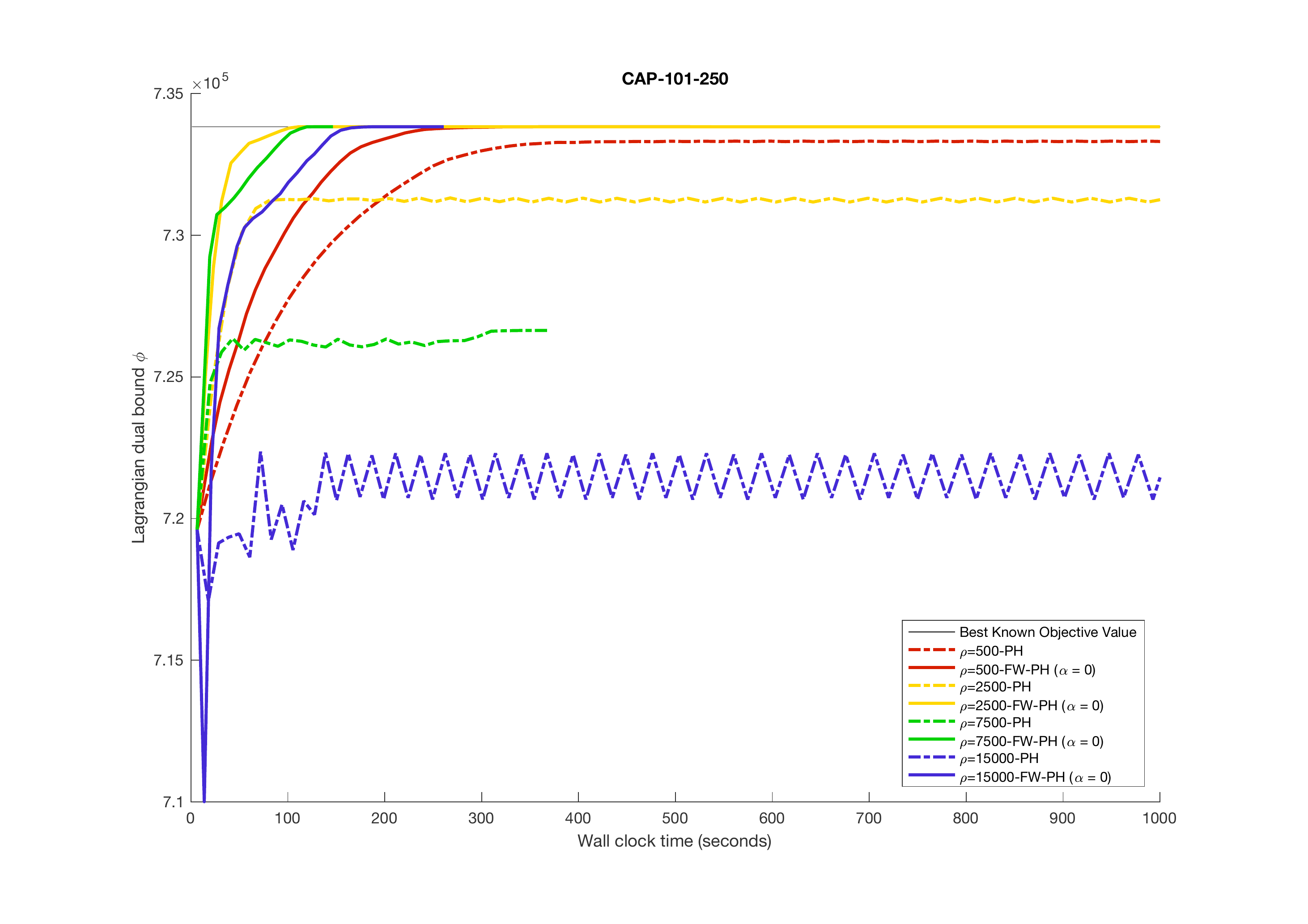}
\caption{Convergence profile for CAP-101-250 (PH and FW-PH with $\alpha = 0$)\label{FigCAP101}}
\end{figure}
\begin{figure}[hbtp]
\includegraphics[trim = 30mm 10mm 25mm 10mm, clip,width=0.9\textwidth]{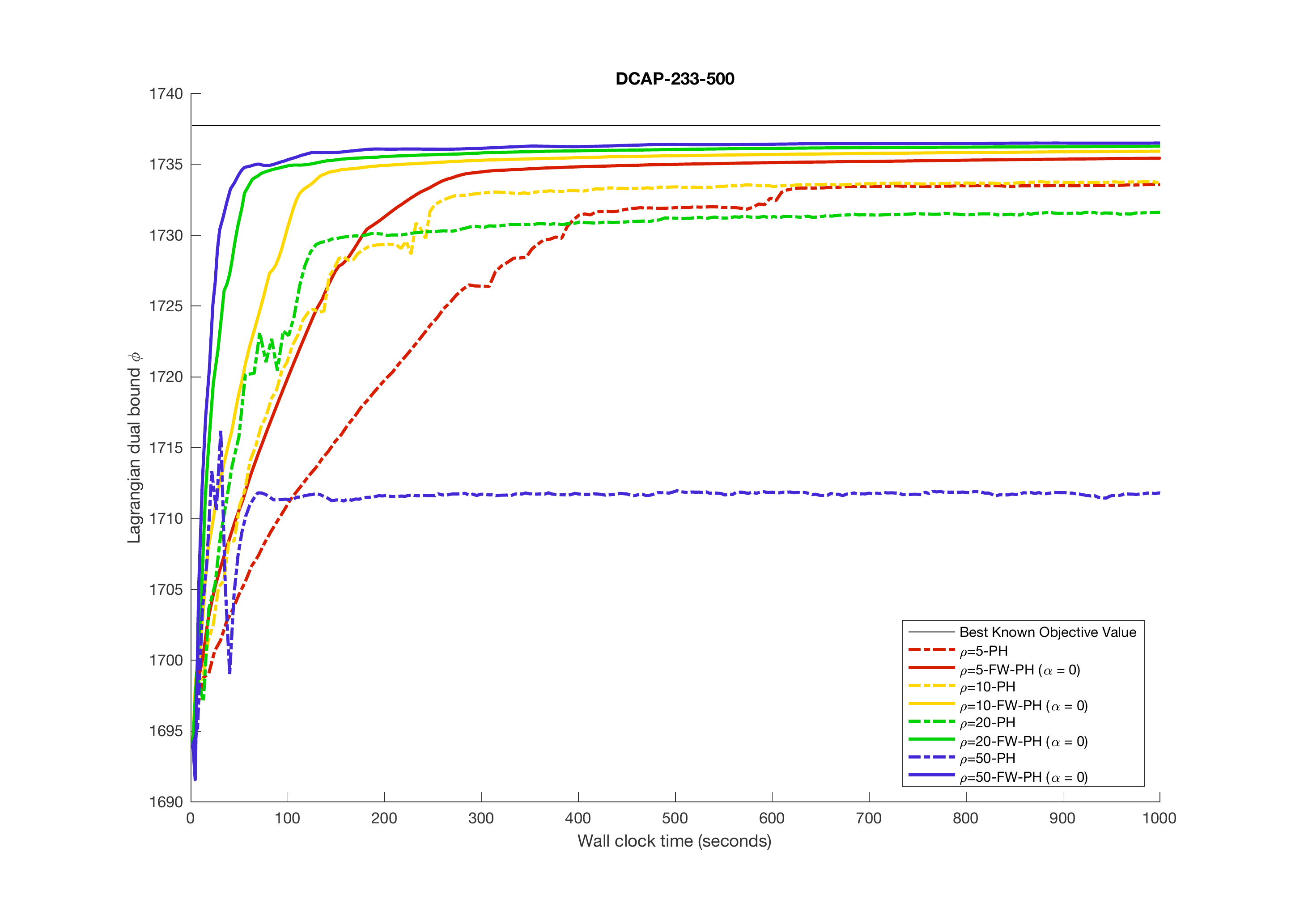}
\caption{Convergence profile for DCAP-233-500 (PH and FW-PH with $\alpha = 0$)\label{FigDCAP233500}}
\end{figure}
\begin{figure}[hbtp]
\includegraphics[trim = 30mm 10mm 25mm 10mm, clip,width=0.9\textwidth]{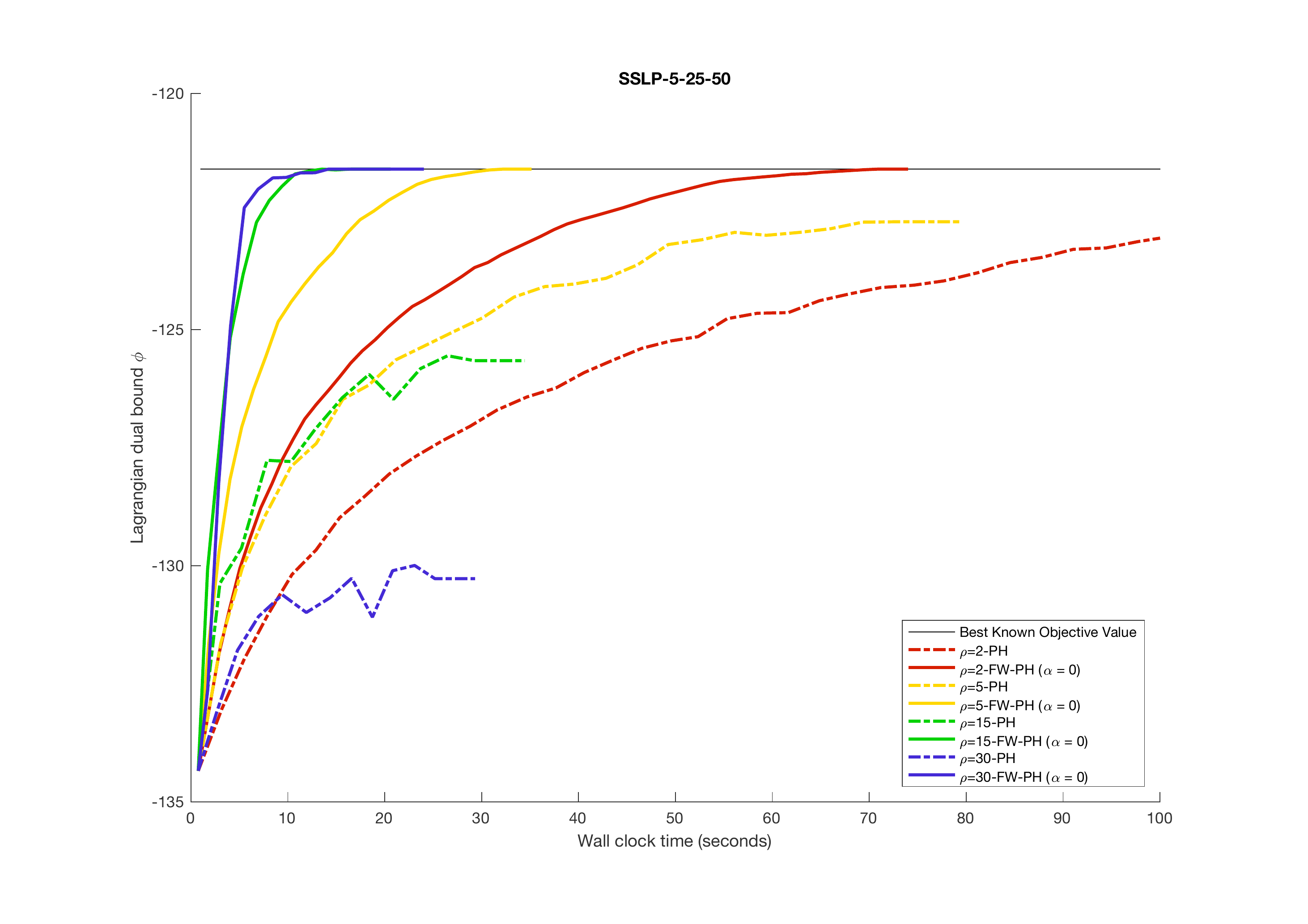}
\caption{Convergence profile for SSLP-5-25-50 (PH and FW-PH with $\alpha = 0$)\label{FigSSLP5-25-50}}
\end{figure}
\begin{figure}[hbtp]
\includegraphics[trim = 30mm 10mm 25mm 10mm, clip,width=0.9\textwidth]{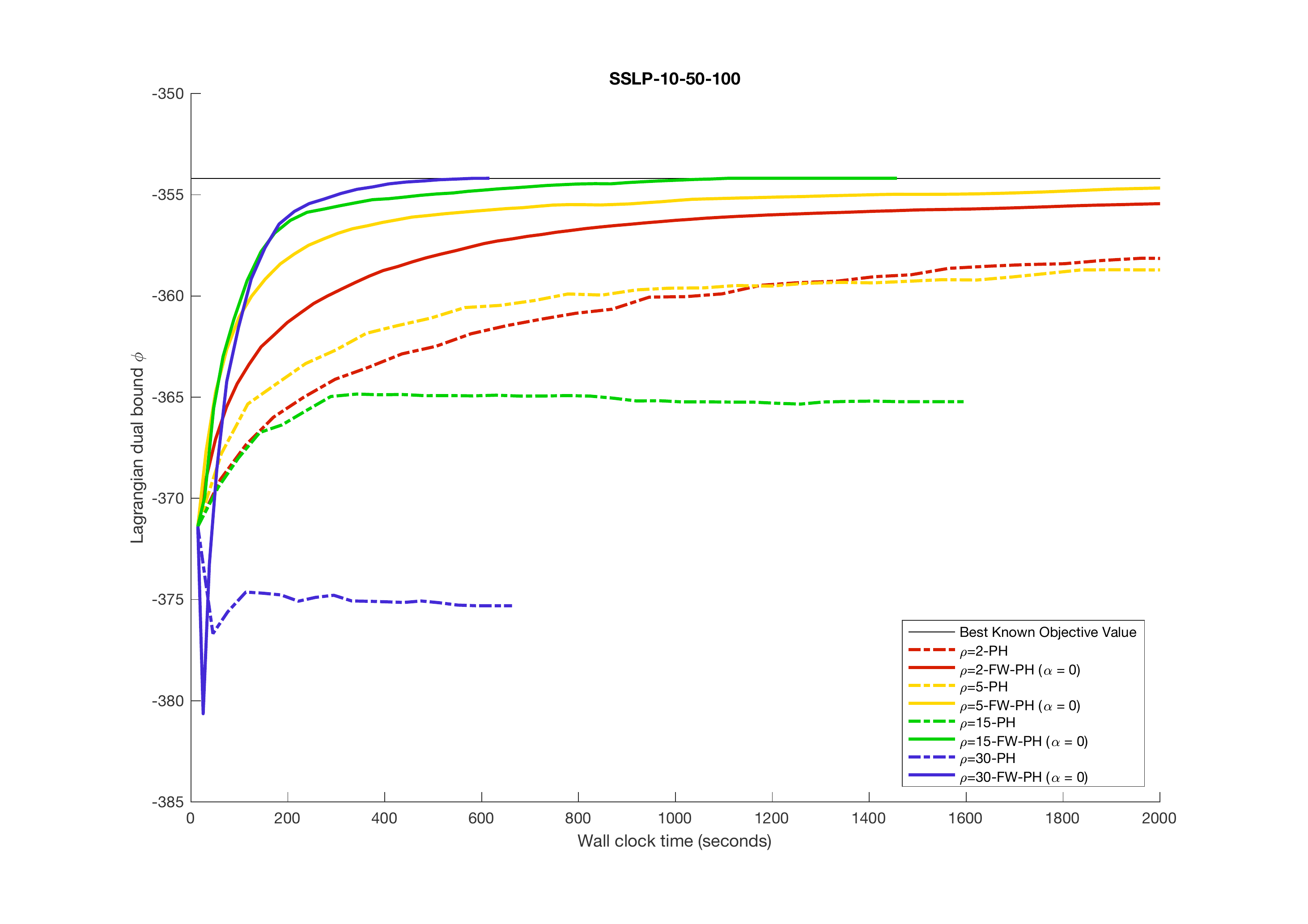}
\caption{Convergence profile for SSLP-10-50-100 (PH and FW-PH with $\alpha = 0$)\label{FigSSLP10-50-100}}
\end{figure}

\section{Conclusions and future work} \label{Sec5}

In this paper, we have presented an alternative approach to compute nonanticipativity Lagrangian
bounds associated with SMIPs that combines ideas from the Progressive Hedging (PH) and the Frank-Wolfe (FW) based methods. We first note that while Lagrangian bounds can be recovered with  PH, this requires---for each iteration and each scenario---the solution of an additional MILP subproblem in addition to the MIQP subproblem. Furthermore, \blue{when using the PH method directly,} the Lagrangian bounds \blue{may} converge suboptimally, cycle (for large penalties), or converge very slowly (for small penalties).

To overcome the lack of theoretical support for the above use of PH, we first described a straightforward integration of PH and a FW-like approach such as the Simplicial Decomposition Method (SDM), where SDM is used to compute the primal updates in PH. Its convergence only requires noting that SDM applied to a convex problem with a bounded polyhedral constraint set terminates finitely with optimal convergence. However, for the stated goal of computing high\blue{-}quality Lagrangian bounds efficiently, the benefits of relying on the optimal convergence of SDM is far outweighed by the computational costs incurred.

As an alternative, we \blue{propose} the contributed algorithm, FW-PH, that is analyzed under general assumptions on how the Lagrangian bounds are computed and on the number of SDM iterations used for each dual update. Furthermore, under mild assumptions on the initialization of the algorithm, FW-PH only requires the solution of a MILP subproblem and a continuous \red{convex} quadratic subproblem for each iteration and each scenario. FW-PH is versatile enough to handle a wide range of SMIPs with integrality restrictions in any stage, while providing rapid improvement in the Lagrangian bound in the early iterations that is consistent across a wide range of penalty parameter values. Although we have opted to focus on two-stage problems with recourse, the generalization of the proposed approach to the multi-stage case is also possible.

Numerical results are encouraging as they suggest that the proposed FW-PH method applied to SMIP problems usually outperforms the traditional PH method %for large penalty values
with respect to how quickly the quality of the generated Lagrangian bounds improves. \green{This is especially true with the use of larger penalty values.} For all problems considered and for all but the smallest penalties considered,  the FW-PH method displayed better performance over PH in terms of the quality of the final Lagrangian bounds at the end of the allotted \red{wall clock} time.
%This improvement is consistent across the testing of different problems and the use of different penalty parameter values.

%When the penalty values are small, there is not clear trend for one method to outperform the other.
The improved performance of FW-PH over PH for large penalties is significant because it is the effective use of large penalties enabled by FW-PH that yields the most rapid initial dual improvement. This last feature of FW-PH would be most helpful in its use within a branch-and-bound or branch-and-cut framework for providing strong lower bounds (in the case of minimization). In addition to being another means to compute Lagrangian bounds, PH would still have a role in such frameworks as a heuristic for computing a primal feasible solution to the SMIP, thus providing (in the case of minimization) an upper bound on the optimal value.
%It is worth highlighting that, when the penalty parameters are small, the two methods collapse into a subgradient method, where their distinction fades and neither one consistently outperforms the other.

Future research on this subject includes the following. First, FW-PH inherits the potential for parallelization from PH.
Experiments for exploring the benefit of parallelization are therefore warranted. Second, the theoretical support of FW-PH can be strengthened with a better understanding of the behavior of PH (and its generalization ADMM) applied to infeasible problems.
Finally, FW-PH can benefit from a better understanding of how the proximal term penalty coefficient can be varied to improve performance.

%Next, the concept of integrating FW with PH may be extended to the integration of a FW-like method with other Lagrangian decomposition paradigms
%such as the augmented Lagrangian method, the bundle method, and so on. Additionally, the requirements of the theoretically supported version of FW-PH,
%while not necessarily enforced in practice, do have the potential to inform the development of enhanced versions of FW-PH that intelligently determine the best placements of null steps (postponing the updates of the dual multiplier) and penalty parameter adjustments.

%\section*{References}

%% If you have bibdatabase file and want bibtex to generate the
%% bibitems, please use
%%

\bibliographystyle{plain}
\bibliography{fwph}

\pagebreak

\appendix

\section{Additional plots for PH vs. FW-PH for $\alpha = 1$} \label{ApxA}
\begin{figure}[h]
\includegraphics[trim = 30mm 10mm 30mm 10mm, clip, width=0.77\textwidth]{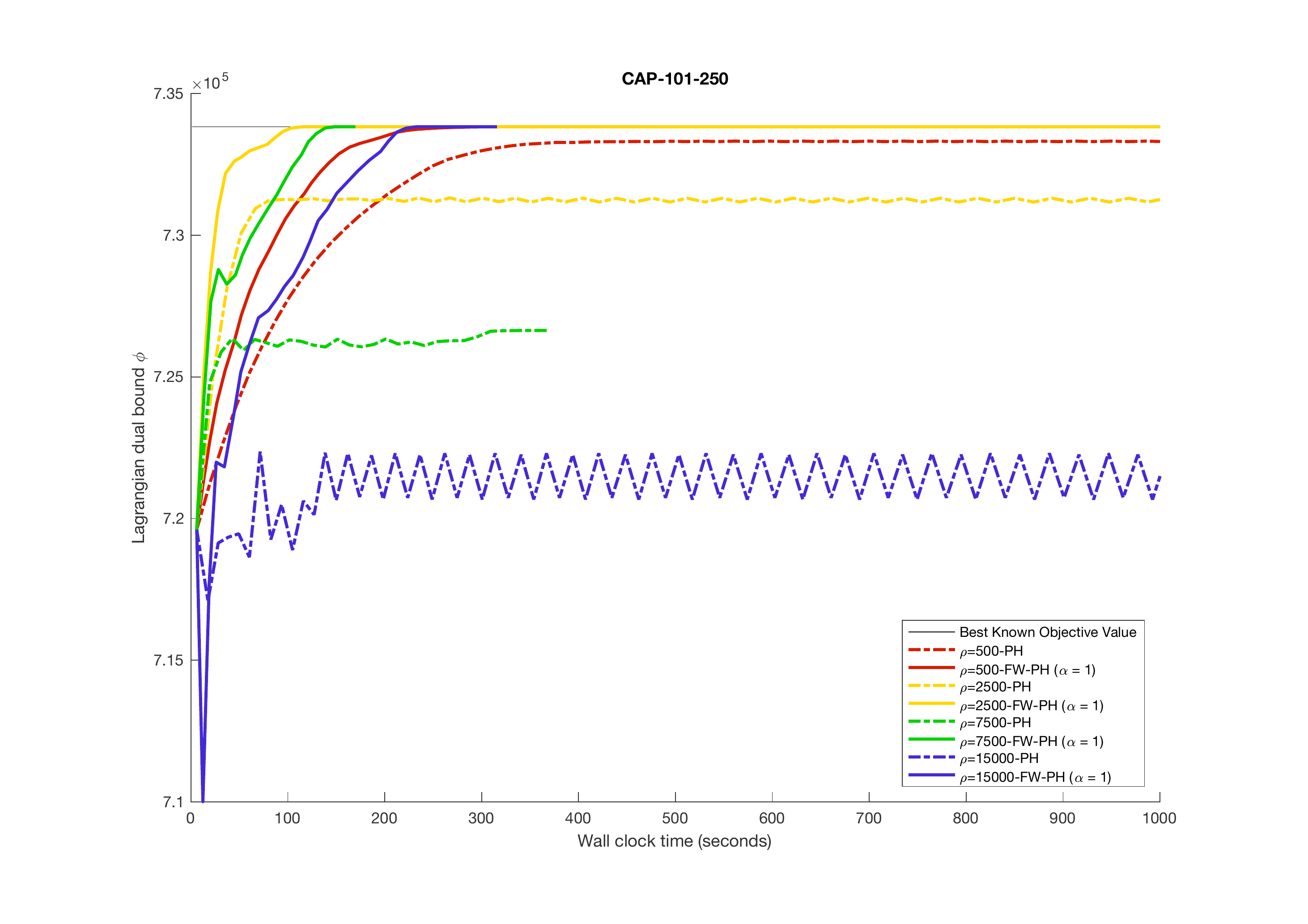}
\caption{Convergence profile for CAP-101-250 (PH and FW-PH with $\alpha = 1$)\label{FigCAP101A}}
\end{figure}
\begin{figure}[h]
\includegraphics[trim = 30mm 10mm 30mm 10mm, clip,width=0.77\textwidth]{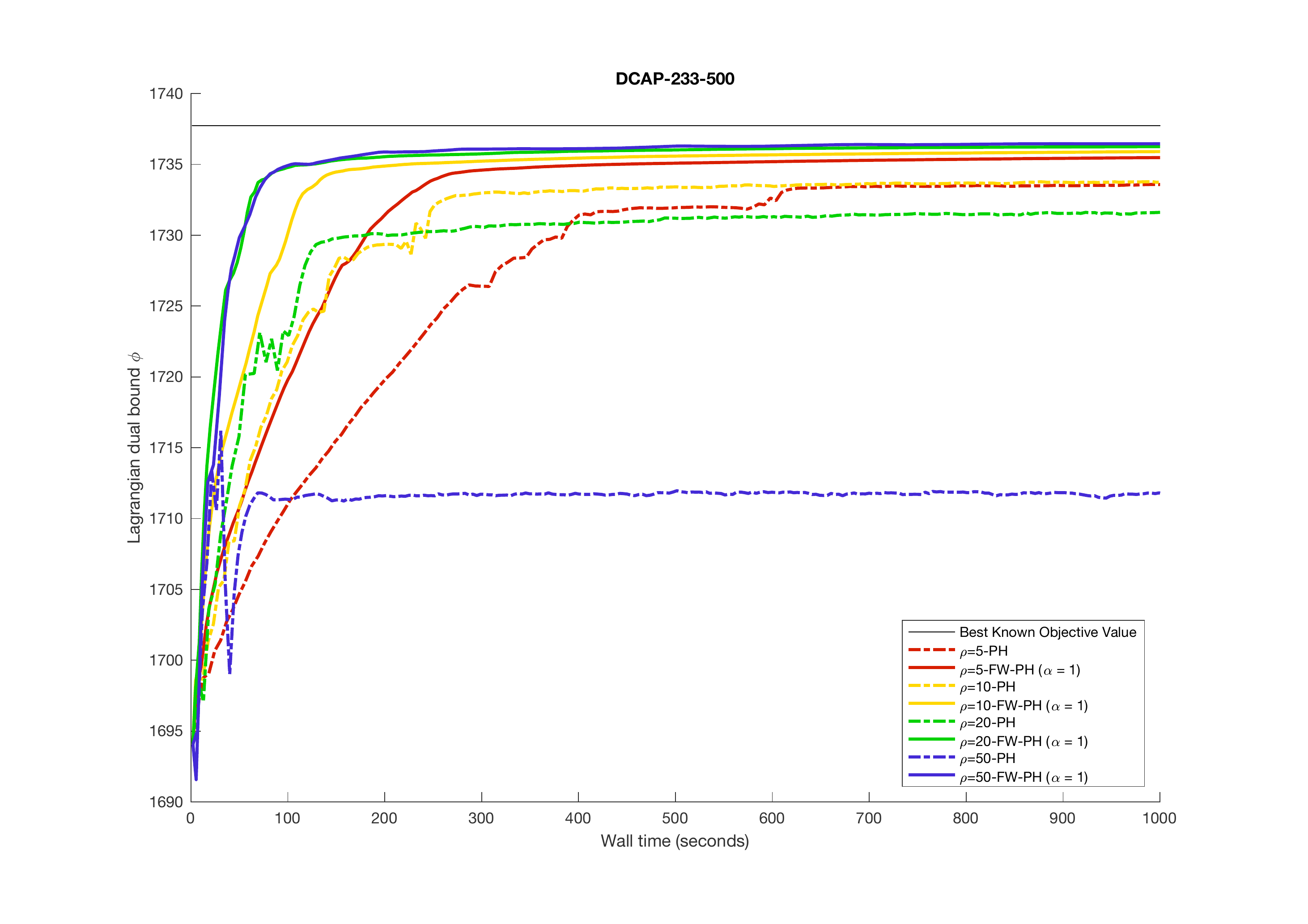}
\caption{Convergence profile for DCAP-233-500 (PH and FW-PH with $\alpha = 1$)\label{FigDCAP233500A}}
\end{figure}
\begin{figure}[btp]
\includegraphics[trim = 30mm 10mm 25mm 10mm, clip,width=0.85\textwidth]{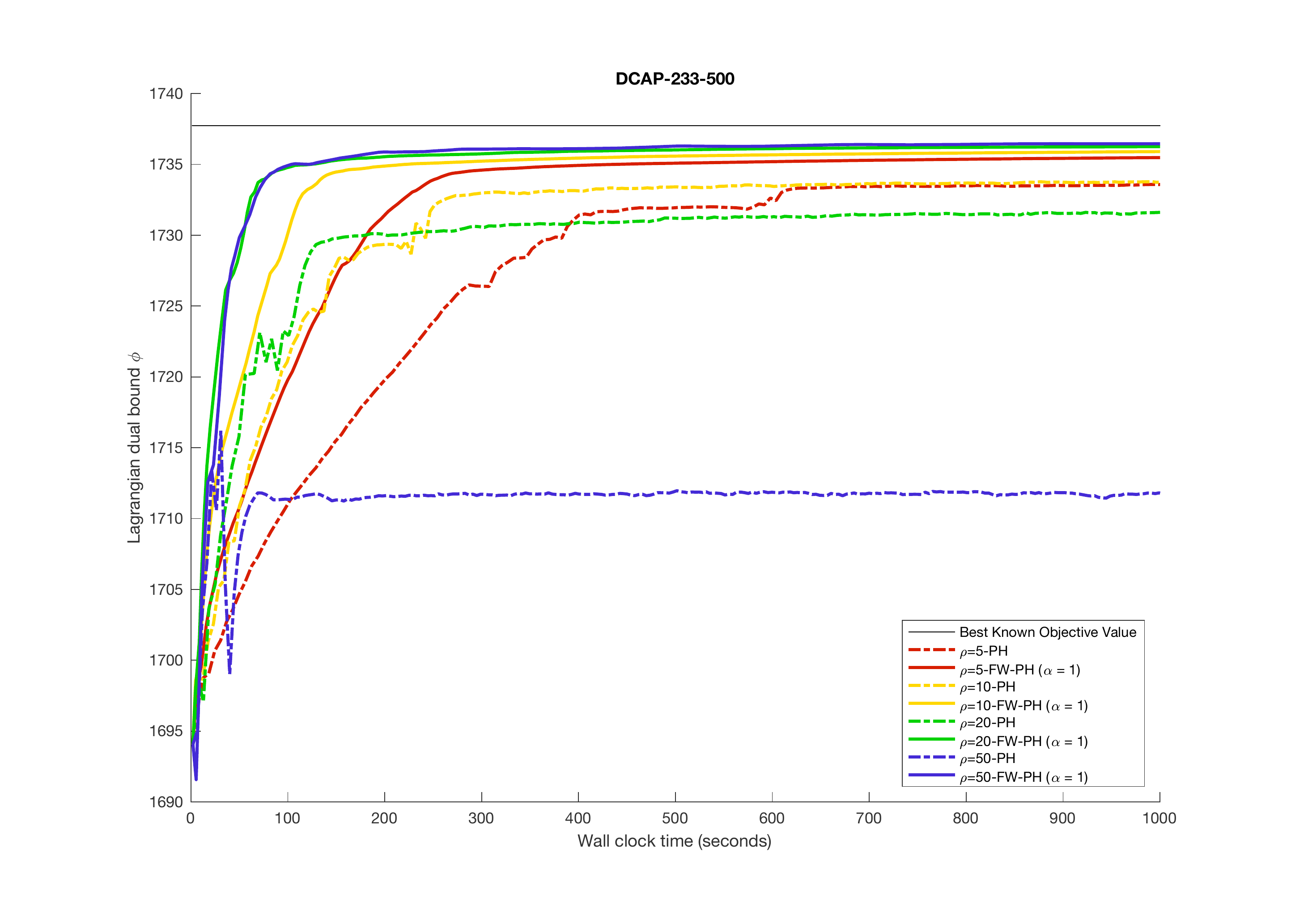}
\caption{Convergence profile for SSLP-5-25-50 (PH and FW-PH with $\alpha = 1$)\label{FigSSLP5-25-50A}}
\end{figure}
\begin{figure}[hbtp]
\includegraphics[trim = 30mm 10mm 30mm 10mm, clip,width=0.85\textwidth]{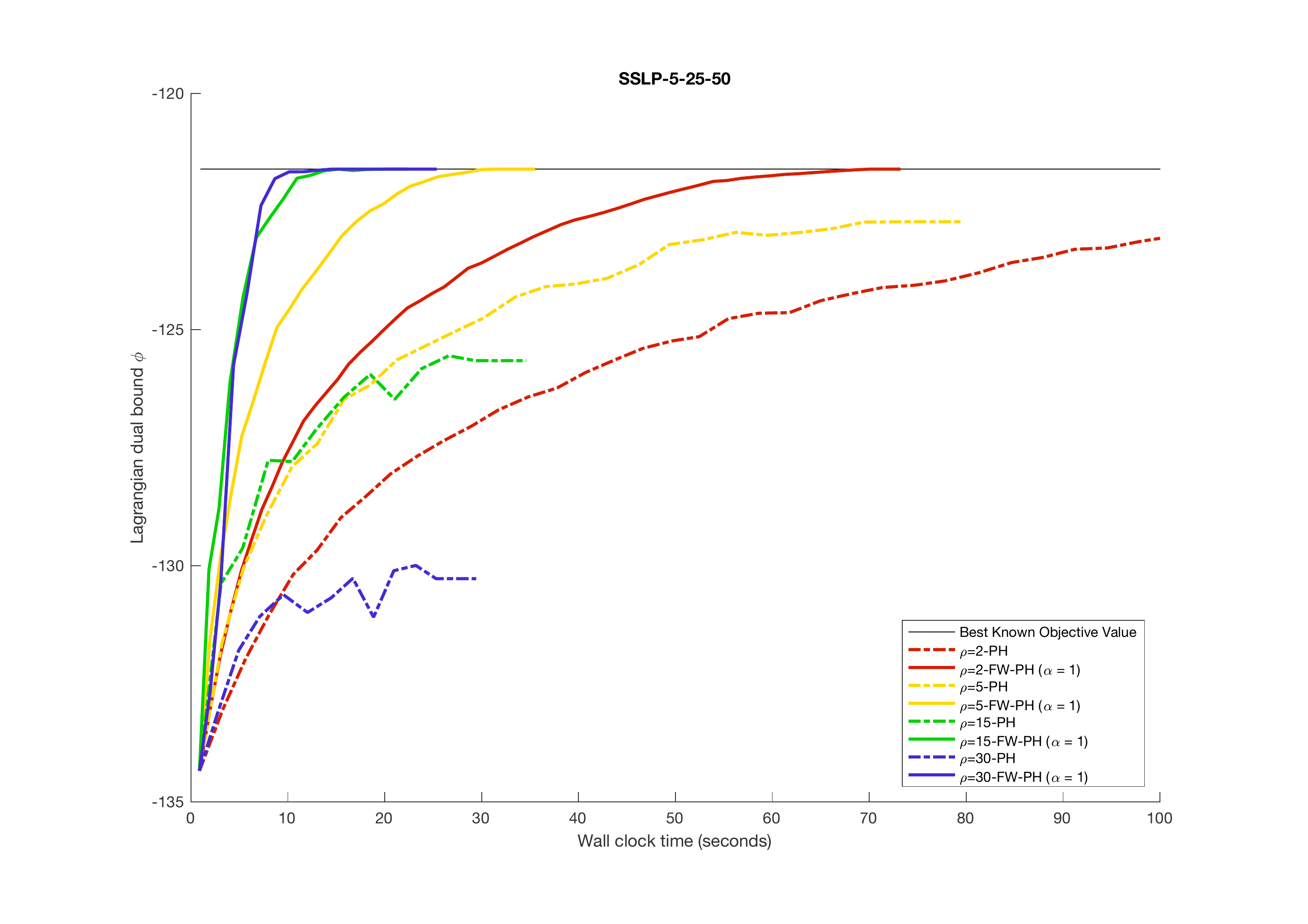}
\caption{Convergence profile for SSLP-10-50-100 (PH and FW-PH with $\alpha = 1$)\label{FigSSLP10-50-100A}}
\end{figure}

\end{document}